\theoremstyle{plain}
\newtheorem{theorem}{Theorem}
\newtheorem{lemma}[theorem]{Lemma}
\newtheorem{proposition}[theorem]{Proposition}
\newtheorem{corollary}[theorem]{Corollary}
\newtheorem{definition}[theorem]{Definition}
\newtheorem{example}[theorem]{Example}
\newtheorem{remark}[theorem]{Remark}
\DeclareMathOperator{\vac}{\mathbf{1}}
\DeclareMathOperator{\End}{End}
\DeclareMathOperator{\Vir}{Vir}
\DeclareMathOperator{\Id}{Id}
\DeclareMathOperator{\Res}{Res} 
\DeclareMathOperator{\ad}{ad}
\DeclareMathOperator{\OPE}{\,\overset{OPE}{\sim}\,}
\newcommand{\R}{\mathbb{R}}
\newcommand{\C}{\mathbb{C}}
\newcommand{\Z}{\mathbb{Z}}
\newcommand{\nth}{\mathrm{th}}
\newcommand{\loc}[1]{\overset{#1}{\sim}}
\newcommand{\nprod}[3]{({#1}\,{
{*}_{{#2}}
}\,{#3})} 
\newcommand{\nord}[1] { \boldsymbol{\mathrm{:}}\,{#1} \, \boldsymbol{\mathrm{:}}}
\begin{document}
\title{Vertex Algebras According to Isaac Newton}
\author{Michael Tuite}
\address{School of Mathematics, Statistics and Applied Mathematics\\ 
National University of Ireland Galway, Galway, Ireland}
\email{michael.tuite@nuigalway.ie}
\keywords{Vertex Algebras, Conformal Field Theory, Finite Differences}

\begin{abstract} 
We give an introduction to vertex algebras using elementary forward difference methods originally due to Isaac Newton.
\end{abstract}
\maketitle
 
\section{Introduction}
In this paper we present an   introduction to the theory of vertex algebras \cite{B}, \cite{FLM}, \cite{ K}, \cite{LL}, \cite{FHL}, \cite{LZ}, \cite{ MN}, \cite{ MT}. A cursory examination of the literature of vertex algebras reveals a variety of identities involving binomial coefficients
\[
 \binom{n}{i}=\frac{n(n-1)\ldots (n-i+1)}{i!},
\]  
for all $n\in\Z$. We describe   how all of these arise from Newton's binomial theorem either directly or else  through elementary Newton finite difference identities \cite{N}.
In particular, our  approach provides 
both a motivation and a new understanding of the fundamental  axioms of locality and lower truncation for vertex operators.   
We also obtain a simplified and stronger proof of the Borcherds-Frenkel-Lepowsky-Meurmann identity.

\section{Newton Forward Differences and Formal Series}\label{Chapt_FDiff}
\subsection{Forward Differences}
We  consider an elementary but very relevant illustration of  formal series techniques used in vertex algebra theory. 
Our example comes from  Newton's theory of  finite  differences.\footnote{Borcherds \cite{B} defined vertex algebras whilst at Trinity College Cambridge  exactly 300 years after Newton \cite{N} invented finite  differences at the same institution!}  
\medskip

Let $U$ be a vector space  over a field of characteristic zero. Let  $ U^{\Z}$ denote the set of doubly infinite sequences $\alpha=\{\alpha_n\}_{n\in \Z}$  with components $\alpha_n\in U$. 
Define the \emph{(first) forward difference operator} $\Delta: U^{\Z}\rightarrow  U^{\Z}$
\begin{align}
(\Delta \alpha)_{n}=\alpha_{n+1}-\alpha_{n},\quad \alpha\in   U^{\Z}.
\label{eq:DeltaFDiff}
\end{align}
The \emph{$N^\mathrm{th}$ forward difference operator} is defined  for all integers $N\ge 2$ by 
\[
\Delta^N = \Delta\circ\Delta^{N-1}.
\]
\begin{example}
For a real function  $f(x)$ define $\alpha_n=f(n)\in \R$. Then $(\Delta \alpha)_{n}$  is the classical Newton forward difference used in the polynomial interpolation of $f(x)$.   
\end{example}

\noindent 
The action of $\Delta^N$ on $ U^{\Z}$ is given by:
\begin{lemma}
\label{lem:Delta}
The $N^\mathrm{th}$ forward difference of $\alpha\in  U^{\Z}$ has components
\begin{align}
(\Delta^N \alpha )_{n}=\sum_{i\ge 0} (-1)^i\binom{N}{i} \alpha_{n+N-i}.
\label{eq:Delta}
\end{align}
\end{lemma}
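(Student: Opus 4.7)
My plan is to prove the formula by induction on $N$, although I will first note an alternative conceptual route via the binomial theorem that matches the spirit of the paper.

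The slick approach is to introduce the shift operator $S:U^\Z\to U^\Z$ defined by $(S\alpha)_n=\alpha_{n+1}$ and the identity $I$. Then the definition \eqref{eq:DeltaFDiff} reads $\Delta=S-I$. Since $S$ and $I$ commute, Newton's binomial theorem for commuting operators gives
\[
\Delta^N=(S-I)^N=\sum_{i=0}^{N}\binom{N}{i}(-1)^i\,S^{N-i}.
\]
Evaluating the $n$-th component and noting $(S^{N-i}\alpha)_n=\alpha_{n+N-i}$ yields \eqref{eq:Delta}, with the sum automatically truncating at $i=N$ because $\binom{N}{i}=0$ for $i>N$, which is consistent with writing the sum as $\sum_{i\ge 0}$.

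If instead one wants a self-contained derivation without invoking operator calculus, I would prove \eqref{eq:Delta} by induction on $N$. The base case $N=1$ is exactly the definition, with $\binom{1}{0}=1$ and $\binom{1}{1}=1$. For the inductive step, assuming the formula for $N$, I would compute
\[
(\Delta^{N+1}\alpha)_n=(\Delta^N\alpha)_{n+1}-(\Delta^N\alpha)_n=\sum_{i\ge 0}(-1)^i\binom{N}{i}\alpha_{n+N+1-i}-\sum_{i\ge 0}(-1)^i\binom{N}{i}\alpha_{n+N-i}.
\]
After reindexing the second sum by $i\mapsto i-1$ and combining, the coefficient of $\alpha_{n+N+1-i}$ becomes $(-1)^i\bigl[\binom{N}{i}+\binom{N}{i-1}\bigr]$, which by Pascal's rule equals $(-1)^i\binom{N+1}{i}$, completing the induction.

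There is no real obstacle here; the only minor care needed is in the reindexing and in checking the boundary terms $i=0$ and $i=N+1$, where the missing binomial coefficient in one of the two sums is automatically zero, so Pascal's rule still gives the correct coefficient. The operator-theoretic proof is essentially a one-liner and is the cleaner presentation to adopt, given that the whole paper is framed around Newton's binomial theorem.
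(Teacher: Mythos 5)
Your first (operator-theoretic) argument is exactly the paper's proof: the paper writes $\Delta = F-I$ for the forward shift operator $F$ and invokes Newton's binomial identity $(F-I)^N\alpha=\sum_{i\ge 0}\binom{N}{i}(-1)^iF^{N-i}\alpha$, which is your $S-I$ decomposition verbatim. Your inductive argument via Pascal's rule is a correct, more elementary alternative, but the one-line binomial route you recommend is the one the paper adopts.
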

\begin{proof}
Write $\Delta = F-I$ where $F$ is the forward shift operator 
\begin{align}
(F \alpha)_n=\alpha_{n+1},
\label{eq:Fshift}
\end{align}
and  $I$ is the identity operator. 
The result follows from Newton's binomial identity
\[
(F-I)^N \alpha=\sum_{i\ge 0} \binom{N}{i} (-1)^i F^{N-i}\alpha.
\]
\end{proof}
\noindent We now consider $\ker\Delta^N$, the space of sequences with zero $N^\mathrm{th}$ forward difference. 
\begin{proposition}[Newton's Forward Difference Formula]
\label{prop:kerDel}
Let $\alpha\in U^{\Z}$ with components $\alpha_n$. 
If $\alpha\in\ker\Delta^N$ for some $N\ge 1$ then for all $n\in \Z$
\begin{align}
 \alpha_n =\sum_{i\ge 0} \binom{n}{i}\left(\Delta^i\alpha\right)_{0}.
\label{eq:alpha_poly}
\end{align}
Conversely, if $\alpha\in U^{\Z}$ has components
$\alpha_n =\sum\limits_{i=0}^{N-1} \displaystyle{\binom{n}{i}}R_i$ for $R_i\in U$ and some $N\ge 1$ then $\alpha\in\ker \Delta^N$.
\end{proposition}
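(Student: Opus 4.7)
My plan is to prove both implications using Pascal's rule $\binom{n+1}{i} = \binom{n}{i} + \binom{n}{i-1}$, which holds for every $n\in\Z$ and $i\ge 0$ as a polynomial identity (with the convention $\binom{n}{-1}=0$). For the converse direction, I would first observe that $\Delta$ sends the sequence $n\mapsto\binom{n}{i}$ to $n\mapsto\binom{n}{i-1}$, which is a one-line calculation from Pascal's rule. Iterating, $\Delta^N$ sends $n\mapsto\binom{n}{i}$ to $n\mapsto\binom{n}{i-N}$, which vanishes identically for $0\le i\le N-1$, so by linearity the sequence $\alpha_n=\sum_{i=0}^{N-1}\binom{n}{i}R_i$ lies in $\ker\Delta^N$.

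For the forward direction, the strategy is to upgrade the desired formula to an operator identity. Since $\alpha\in\ker\Delta^N$, the operator $\Delta$ is nilpotent on $\alpha$, and for each $n\in\Z$ the truncated expression
\[
G_n := \sum_{i=0}^{N-1}\binom{n}{i}\Delta^i
\]
is a well-defined operator on $\ker\Delta^N$. I would verify that $G_0=I$ and that $G_n\cdot F = G_{n+1}$, where $F=I+\Delta$ is the forward shift from Lemma~\ref{lem:Delta}; the second identity follows from Pascal's rule after expanding $G_n(I+\Delta)$ and re-indexing. Since $F$ is invertible on $U^{\Z}$ (with inverse the backward shift), induction in both directions starting from $n=0$ gives $G_n=F^n$ for all $n\in\Z$. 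Reading off the $0^{\nth}$ component of the identity $G_n\alpha = F^n\alpha$ yields the formula \eqref{eq:alpha_poly}, with only finitely many nonzero terms because $\Delta^i\alpha=0$ for $i\ge N$.

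The main subtlety I anticipate is the handling of negative $n$, where the binomial coefficients $\binom{n}{i}$ are less familiar and the na\"ive binomial expansion of $(I+\Delta)^n$ is not directly available. The resolution is to rely on the polynomial nature of $\binom{n}{i}$ in $n$ throughout, so that Pascal's rule and the operator recursion $G_{n+1}=G_nF$ hold uniformly in $n\in\Z$; once this is in place, the extension to negative $n$ requires no extra work beyond the backward step of the induction.
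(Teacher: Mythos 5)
Your proof is correct, and while it rests on the same pivot as the paper's --- writing the shift as $F=I+\Delta$ and establishing $F^n\alpha=\sum_{i}\binom{n}{i}\Delta^i\alpha$ on $\ker\Delta^N$ --- the execution differs in both directions in ways worth noting. For the converse, the paper argues by degree counting: $\binom{n}{i}=\frac{1}{i!}n^i+O(n^{i-1})$ and $\Delta$ lowers polynomial degree by one, so $N$ applications kill a degree-$(N-1)$ polynomial. Your route via Pascal's rule, showing $\Delta$ maps the sequence $n\mapsto\binom{n}{i}$ exactly to $n\mapsto\binom{n}{i-1}$, is sharper: it identifies the binomial sequences as an ``eigenbasis-like'' ladder for $\Delta$ rather than merely tracking leading terms, and it reuses the same lemma in the forward direction. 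For the forward direction, the paper treats $n\ge 0$ as an obvious binomial expansion and disposes of $n=-k<0$ with the terse claim that one ``can verify'' $\alpha=F^k\sum_{i\ge 0}\binom{-k}{i}\Delta^i\alpha$; your two-sided induction on the operator identity $G_{n+1}=G_nF$ (valid on $\ker\Delta^N$ because the overflow term $\binom{n}{N-1}\Delta^N$ vanishes there, and extendable to negative $n$ because the backward shift preserves $\ker\Delta^N$) supplies exactly the verification the paper omits, uniformly in $n\in\Z$. The one point you should make explicit when writing this up is that $G_n$ and $F^{\pm 1}$ all preserve $\ker\Delta^N$ (they commute with $\Delta$), so the identity $G_n=F^n$ genuinely makes sense as an equality of operators on that subspace; with that said, your argument is complete and, if anything, more self-contained than the paper's.
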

\begin{proof}
Assume $\alpha\in\ker\Delta^N$. We have $\alpha_{n}=\left(F^{n}\alpha\right)_{0}$  for all $n\in\Z$ with $F$  the forward shift operator of  \eqref{eq:Fshift}. Then \eqref{eq:alpha_poly} follows from a binomial expansion of $F^n\alpha=(I+\Delta)^n\alpha$
\begin{align}
F^n\alpha=\sum_{i\ge 0}  \binom{n}{i}\Delta^i\alpha,
\label{eq:Fnalpha}
\end{align}
for all $n\in \Z$. For $n\ge 0$, \eqref{eq:Fnalpha} is obvious whereas for $n=-k<0$  we can verify that 
\[
\alpha=F^k \sum_{i\ge 0}  \binom{-k}{i}\Delta^i\alpha,
\]
for all $\alpha\in\ker\Delta^N$. Hence \eqref{eq:Fnalpha} holds and therefore \eqref{eq:alpha_poly} results. 

Conversely, if $\alpha_n =\sum_{i=0}^{N-1} {\binom{n}{i}}R_i$ then 
noting that for $\beta_n=n^k$ and  $k>0$
\begin{align*}
(\Delta \beta)_{n}=kn^{k-1}+O\left(n^{k-2}\right) ,
\end{align*}
we find $\alpha\in\ker\Delta^N$ since $ \binom{n}{i}=\frac{1}{i!}n^i+O\left(n^{i-1}\right)$. 
\end{proof}

\noindent  We also note the following result:
\begin{corollary}
\label{cor:poly}
$\alpha\in\ker\Delta^N$ iff $\alpha_n=p_{N-1}(n)$ where $p_{N-1}(n)$ is a degree $N-1$ polynomial in $n$  with coefficients in $U$. 
\end{corollary}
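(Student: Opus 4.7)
The plan is to derive both directions of the equivalence as almost immediate consequences of Proposition~\ref{prop:kerDel}, with the only substantive observation being a change of basis between the monomial basis $\{1,n,n^2,\ldots,n^{N-1}\}$ and the binomial basis $\{\binom{n}{0},\binom{n}{1},\ldots,\binom{n}{N-1}\}$ of polynomials in $n$ of degree at most $N-1$.

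For the forward direction, I would suppose $\alpha\in\ker\Delta^N$. First I would observe that $\Delta^N\alpha=0$ implies $\Delta^i\alpha=0$ for every $i\ge N$, so in particular $(\Delta^i\alpha)_0=0$ for $i\ge N$. Substituting this into the Newton expansion \eqref{eq:alpha_poly} of the proposition truncates the sum to
\[
\alpha_n=\sum_{i=0}^{N-1}\binom{n}{i}(\Delta^i\alpha)_0.
\]
Since each $\binom{n}{i}$ is a polynomial in $n$ of degree $i$ with rational (hence characteristic-zero) coefficients, $\alpha_n$ is a polynomial $p_{N-1}(n)$ in $n$ of degree at most $N-1$ with coefficients in $U$.

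For the converse, suppose $\alpha_n=p_{N-1}(n)$ is such a polynomial. Here the one point worth emphasising is that the collection $\{\binom{n}{i}\}_{i=0}^{N-1}$ is a $U$-basis for polynomials in $n$ of degree at most $N-1$ with coefficients in $U$: by the expansion $\binom{n}{i}=\frac{1}{i!}n^i+O(n^{i-1})$ already used in the proof of the proposition, the change-of-basis matrix from $\{n^i\}$ to $\{\binom{n}{i}\}$ is upper triangular with nonzero diagonal entries $1/i!$, so it is invertible in characteristic zero. Hence one may write $p_{N-1}(n)=\sum_{i=0}^{N-1}\binom{n}{i}R_i$ for suitable $R_i\in U$, and the converse half of Proposition~\ref{prop:kerDel} then gives $\alpha\in\ker\Delta^N$.

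There is no real obstacle; the only point where one must pause is the change-of-basis step in the converse, which relies on the field being of characteristic zero so that the factorials $i!$ are invertible. Everything else is direct quotation from the proposition.
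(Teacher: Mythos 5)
Your proof is correct and matches the paper's intent: the paper states Corollary~\ref{cor:poly} without proof as an immediate consequence of Proposition~\ref{prop:kerDel}, and the change of basis between $\{n^i\}$ and $\{\binom{n}{i}\}$ that you make explicit is exactly the observation already embedded in the paper's proof of the converse half of that proposition (via $\binom{n}{i}=\frac{1}{i!}n^i+O(n^{i-1})$). Your reading of ``degree $N-1$'' as ``degree at most $N-1$'' is the right one for the equivalence to hold.
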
 

\begin{example}
Let $p_{N-1}(x)$ be a polynomial of degree $N-1$ with coefficients in $\R$. 
Then Proposition~\ref{prop:kerDel} is  Newton's forward difference formula expressing $p_{N-1}(n)$ for all $n\in\Z$  in terms of $p_{N-1}(i)$ for $i=0,\ldots ,N-1$. 
Replacing $n$ by $x$ on the right hand side of \eqref{eq:alpha_poly} gives the  Newton  interpolating polynomial for a real function $f(x)$ in terms of $\alpha_i=f(i)$ for $i=0,1,\ldots ,N-1$.
\end{example}

\subsection{Formal Generating Series}
Define a    \emph{formal generating series} $\alpha(z)$ for 
$\alpha\in U^{\Z}$  by
\begin{align}\label{alphagen}
\alpha(z) = \sum_{n \in \Z} \alpha_nz^{-n-1} \in U[[z, z^{-1}]],
\end{align}
where $U[[z, z^{-1}]]$ denotes   the space of  formal Laurent series in an indeterminate parameter $z$ with coefficients in $U$.
We associate the component $\alpha_n$ with $z^{-n-1}$ for reasons that become  clearer below e.g. \eqref{eq:deltai}, Lemma~\ref{lem:kerDelGen} and Theorem~\ref{th:BFLM}.
\medskip

Define the \emph{formal derivative} $\partial$ of $\alpha(z)$  by
\begin{align}
\partial\alpha(z) = \sum_{n \in \Z} \alpha_{n}(-n-1)z^{-n-2} 
=\sum_{n \in \Z} \left(-n\alpha_{n-1}\right)z^{-n-1}.\label{delz}
\end{align}
We also define $\partial^{(i)}:=\dfrac{1}{i!}\,\partial^{i}$. 
The  \emph{formal residue} of the Laurent series \eqref{alphagen}  is defined  by
\begin{align}\label{Resz}
\Res_{z}\alpha(z) = \alpha_{0}.
\end{align}

\begin{lemma}\label{lem:LeibIP}
The formal series $\alpha(z)$ satisfies versions of the fundamental theorem of calculus, the Leibniz rule  and integration by parts:
\begin{align}
\label{eq:FTC}
&\Res_{z}\partial \alpha(z) =0,\\
\label{eq:Leibniz}
&\partial\left(z^k\alpha(z) \right)=kz^{k-1}\alpha(z)+z^k\partial\alpha(z),\\
&\Res_{z}z^k\partial\alpha(z) =-k\Res_{z}z^{k-1}\alpha(z),
\label{eq:ibyp}
\end{align}
for all $k\in\Z$.
\end{lemma}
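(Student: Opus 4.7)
The plan is to verify each of the three identities directly from the definitions \eqref{alphagen}, \eqref{delz}, and \eqref{Resz}, using the third identity as a formal corollary of the first two rather than an independent calculation.

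First I would prove \eqref{eq:FTC}. By the second expression in \eqref{delz}, the coefficient of $z^{-n-1}$ in $\partial\alpha(z)$ is $-n\alpha_{n-1}$. Setting $n=0$ recovers the residue, which is $0\cdot\alpha_{-1}=0$. This is a one-line index check, so I do not anticipate any obstacle.

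Next I would establish the Leibniz rule \eqref{eq:Leibniz} for $k\in\Z$. Writing $z^{k}\alpha(z)=\sum_{n\in\Z}\alpha_{n}z^{k-n-1}$ and applying $\partial$ termwise produces $\sum_{n\in\Z}(k-n-1)\alpha_{n}z^{k-n-2}$. Splitting the scalar coefficient as $(k-n-1)=k+(-n-1)$ yields the two summands $kz^{k-1}\alpha(z)$ and $z^{k}\partial\alpha(z)$, the latter using the first form in \eqref{delz}. No convergence issues arise because everything is formal and each coefficient is a finite sum.

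Finally I would deduce \eqref{eq:ibyp} at once by combining the previous two results. Applying $\Res_{z}$ to both sides of \eqref{eq:Leibniz} gives
\begin{align*}
\Res_{z}\partial\bigl(z^{k}\alpha(z)\bigr)=k\Res_{z}z^{k-1}\alpha(z)+\Res_{z}z^{k}\partial\alpha(z).
\end{align*}
The left-hand side vanishes by \eqref{eq:FTC} applied to the series $z^{k}\alpha(z)$, which is again an element of $U[[z,z^{-1}]]$, and rearranging yields \eqref{eq:ibyp}.

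Since every step is a finite manipulation of coefficients in a doubly infinite Laurent series, there is essentially no conceptual obstacle; the only point requiring mild care is tracking the two equivalent forms of $\partial\alpha(z)$ in \eqref{delz} so that the index $n$ in each sum lines up correctly with the power of $z$ under consideration.
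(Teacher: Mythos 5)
Your proof is correct and follows the same route as the paper: the first two identities are verified directly from the definition \eqref{delz}, and \eqref{eq:ibyp} is deduced by applying $\Res_{z}$ to the Leibniz rule and invoking $\Res_{z}\partial\bigl(z^{k}\alpha(z)\bigr)=0$. You have merely written out the coefficient bookkeeping that the paper's one-line proof leaves implicit.
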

 \begin{proof}
\eqref{delz} immediately   implies \eqref{eq:FTC} and \eqref{eq:Leibniz}. $\Res_{z}\partial \left(z^k\alpha(z) \right)=0$ implies \eqref{eq:ibyp}.  
\end{proof}

The formal nature of a  series $\alpha(z)$ is well-illustrated by the constant sequence
$\alpha_n=\alpha_0\in U$, for all $n$, for which
\[
\alpha(z) = \alpha_0\,\delta(z),
\]
for formal \emph{delta series} defined by
\begin{align}\label{eq:delta}
\delta(z) = \sum_{m \in \Z} z^{m}.
\end{align}
The delta series  is analogous to the Dirac delta function in the sense that 
\begin{align}
z^k\delta(z)=\delta(z),
\label{eq:fzdelta}
\end{align}
for all $k\in\Z$. In particular, we note that 
\begin{align}
(z-1)\delta(z)=0.
\label{eq:delta0}
\end{align}
We also define  a family of formal delta series indexed by  integers $i\ge 0$ as follows: 
\begin{align}\label{deltazi}
\delta^{(i)}(z) =&(-1)^i \partial^{(i)} \delta(z)
=\sum_{m \in \mathbb{Z}} (-1)^i\binom{m}{i}z^{m-i},
\end{align}
with $\delta^{(0)}(z) =\delta(z)$.  
On relabelling, we note that \eqref{deltazi} can be rewritten as 
\begin{align}\label{eq:deltai}
\delta^{(i)}(z) = \sum_{n \in \Z} \binom{n}{i}z^{-n-1},
\end{align}
i.e. $\delta^{(i)}(z) $ is the formal series for the integer sequence 
$\left\{\binom{n}{i}\right\}_{n\in \Z} $.
We further find that  \eqref{eq:delta0} generalises to:
\begin{lemma}
\label{lem:deltafun}
$(z-1)\delta^{(i)}(z)=\delta^{(i-1)}(z)$ for all $i\ge 1$.
\end{lemma}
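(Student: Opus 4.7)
The plan is to work directly from the series definition \eqref{eq:deltai}, which expresses $\delta^{(i)}(z)$ as the generating series of the sequence $\{\binom{n}{i}\}_{n\in\Z}$, and then reduce the identity to Pascal's rule $\binom{n+1}{i}-\binom{n}{i}=\binom{n}{i-1}$.

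Concretely, I would first compute $z\,\delta^{(i)}(z)$ by reindexing: writing $z\,\delta^{(i)}(z)=\sum_{n\in\Z}\binom{n}{i}z^{-n}$ and substituting $m=n-1$ converts this to $\sum_{m\in\Z}\binom{m+1}{i}z^{-m-1}$. Subtracting $\delta^{(i)}(z)=\sum_{n\in\Z}\binom{n}{i}z^{-n-1}$ collects the two series into the single expression
\[
(z-1)\delta^{(i)}(z)=\sum_{n\in\Z}\left[\binom{n+1}{i}-\binom{n}{i}\right]z^{-n-1}.
\]
Applying Pascal's rule coefficientwise identifies the bracket with $\binom{n}{i-1}$, and \eqref{eq:deltai} then gives exactly $\delta^{(i-1)}(z)$.

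As an alternative (which I would mention if a derivative-based proof fits the paper's style better), one can start from $(z-1)\delta(z)=0$ in \eqref{eq:delta0} and apply $\partial^{i}$, using the Leibniz rule \eqref{eq:Leibniz} in the form $\partial^{i}\bigl((z-1)\delta(z)\bigr)=(z-1)\partial^{i}\delta(z)+i\,\partial^{i-1}\delta(z)=0$. Dividing by $i!$ yields $(z-1)\partial^{(i)}\delta(z)=-\partial^{(i-1)}\delta(z)$, and multiplying by $(-1)^{i}$ together with the definition $\delta^{(i)}(z)=(-1)^{i}\partial^{(i)}\delta(z)$ in \eqref{deltazi} produces the claim.

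There is no real obstacle here: both routes are routine manipulations once one chooses between a reindexing argument (most elementary, and fits the spirit of the paper, since Pascal's rule is itself the simplest case of Newton's binomial identity used throughout Section~2) and a differentiation argument. I would prefer the first approach because it makes the relationship $\delta^{(i)}(z)\leftrightarrow \binom{n}{i}$ explicit, which is the motivating observation of \eqref{eq:deltai}.
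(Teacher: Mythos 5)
Your proof is correct. Note that the paper states Lemma~\ref{lem:deltafun} without any proof at all, so there is no argument to compare against; either of your two routes would fill the gap. The reindexing argument is sound: it rests on Pascal's rule $\binom{n+1}{i}-\binom{n}{i}=\binom{n}{i-1}$, which does hold for every $n\in\Z$ under the falling-factorial definition of $\binom{n}{i}$ given in the introduction, and it directly exploits the paper's identification of $\delta^{(i)}(z)$ with the generating series of $\left\{\binom{n}{i}\right\}_{n\in\Z}$ in \eqref{eq:deltai} --- arguably the intended reading, since the lemma is presented as the generalisation of $(z-1)\delta(z)=0$. Your Leibniz-rule alternative, differentiating \eqref{eq:delta0} $i$ times and tracking the sign $(-1)^i$ from \eqref{deltazi}, is equally valid and equally short.
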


Since $(F\alpha)(z)=z\alpha(z)$ for the forward shift operator of \eqref{eq:Fshift}, it follows that the formal series for $\Delta^i \alpha$ is 
\begin{align}
\label{eq:DeliGF}
\left(\Delta^i \alpha\right)(z)=(z-1)^i\alpha(z).
\end{align} 
Thus $\alpha\in\ker\Delta^N$ iff $(z-1)^N\alpha(z)=0$. 
Noting that 
\begin{align}
(\Delta^i \alpha)_{0}=\Res_z(z-1)^i\alpha(z) ,
\label{eq:Resalphai}
\end{align}
we may reformulate  Newton's forward difference formula Proposition~\ref{prop:kerDel} in terms of formal series using 
\eqref{deltazi} and 
\eqref{eq:Resalphai} to find:
\begin{lemma}
\label{lem:kerDelGen}
Let $\alpha\in U^{\Z}$. 
Then $\alpha\in\ker\Delta^N$ iff
\begin{align}
\alpha(z) =\sum_{i=0}^{N-1} R_i\delta^{(i)}(z),
\label{eq:alpha_delta}
\end{align}
for $R_i=\Res_z(z-1)^i\alpha(z) \in U$. 
\end{lemma}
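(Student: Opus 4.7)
The plan is to translate Proposition~\ref{prop:kerDel} directly into the formal series language already set up, using \eqref{eq:DeliGF}, \eqref{eq:Resalphai}, and \eqref{eq:deltai} for the forward direction, and Lemma~\ref{lem:deltafun} together with \eqref{eq:delta0} for the converse.

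For the forward implication, I would start by assuming $\alpha\in\ker\Delta^N$. By Proposition~\ref{prop:kerDel} each component is $\alpha_n=\sum_{i\ge 0}\binom{n}{i}(\Delta^i\alpha)_0$, and $(\Delta^i\alpha)_0=0$ for $i\ge N$ (since $\Delta^N\alpha=0$ and $\Delta^{i}=\Delta^{i-N}\circ\Delta^{N}$). Identity \eqref{eq:Resalphai} lets me identify $(\Delta^i\alpha)_0$ with $R_i:=\Res_z(z-1)^i\alpha(z)$. Substituting into the definition \eqref{alphagen} of the generating series and interchanging the two finite/formal sums,
\[
\alpha(z)=\sum_{n\in\Z}\alpha_n z^{-n-1}=\sum_{i=0}^{N-1}R_i\sum_{n\in\Z}\binom{n}{i}z^{-n-1},
\]
which is exactly $\sum_{i=0}^{N-1}R_i\,\delta^{(i)}(z)$ by \eqref{eq:deltai}.

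For the converse, suppose $\alpha(z)=\sum_{i=0}^{N-1}R_i\,\delta^{(i)}(z)$. By \eqref{eq:DeliGF} it suffices to show that $(z-1)^N\alpha(z)=0$, and by linearity this reduces to checking $(z-1)^N\delta^{(i)}(z)=0$ for each $0\le i\le N-1$. Iterating Lemma~\ref{lem:deltafun} gives $(z-1)^i\delta^{(i)}(z)=\delta^{(0)}(z)=\delta(z)$, and then one further multiplication by $(z-1)$ annihilates it by \eqref{eq:delta0}; hence $(z-1)^{i+1}\delta^{(i)}(z)=0$, and multiplying by the remaining factor $(z-1)^{N-i-1}$ yields $(z-1)^N\delta^{(i)}(z)=0$. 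Finally, to confirm the formula $R_i=\Res_z(z-1)^i\alpha(z)$ is consistent, I compute the residue of $(z-1)^i\alpha(z)=\sum_{j}R_j(z-1)^i\delta^{(j)}(z)$; using Lemma~\ref{lem:deltafun} repeatedly reduces each term to a $\delta^{(j-i)}(z)$ (or to $0$ when $j<i$), whose residue is the Kronecker delta $\delta_{i,j}$ by \eqref{eq:deltai}, so only the $j=i$ term survives and gives $R_i$.

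The only subtlety is the converse direction's annihilation step, where one must be careful that Lemma~\ref{lem:deltafun} is only stated for $i\ge 1$ and the final $(z-1)\delta(z)=0$ step invokes \eqref{eq:delta0} separately; apart from this the entire argument is a mechanical translation of the results already established.
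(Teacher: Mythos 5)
Your proof is correct. The forward direction is exactly what the paper intends: it states the lemma without a separate proof, presenting it as a reformulation of Proposition~\ref{prop:kerDel} via \eqref{eq:Resalphai} and \eqref{eq:deltai}, and your substitution of $\alpha_n=\sum_{i=0}^{N-1}\binom{n}{i}R_i$ into the generating series is that reformulation made explicit. Where you diverge is the converse: the paper's implicit route is to read \eqref{eq:alpha_delta} coefficientwise via \eqref{eq:deltai} as $\alpha_n=\sum_{i=0}^{N-1}\binom{n}{i}R_i$ and then invoke the second half of Proposition~\ref{prop:kerDel} (whose proof is a polynomial-degree argument), whereas you stay entirely in the formal-series picture, using the criterion $(z-1)^N\alpha(z)=0$ from \eqref{eq:DeliGF} together with Lemma~\ref{lem:deltafun} and \eqref{eq:delta0} to annihilate each $\delta^{(i)}(z)$ directly. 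Your route is self-contained at the level of delta-series identities and has the added benefit of verifying that the coefficients in \eqref{eq:alpha_delta} are forced to equal $\Res_z(z-1)^i\alpha(z)$ (via $\Res_z\delta^{(k)}(z)=\binom{0}{k}$), a point the paper gets for free from the identification $R_i=(\Delta^i\alpha)_0$; the paper's route is marginally shorter because Proposition~\ref{prop:kerDel} already did the work. Your closing caveat about Lemma~\ref{lem:deltafun} applying only for $i\ge 1$, with \eqref{eq:delta0} handling the final step, is exactly the right care to take.
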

\medskip

In numerous classical applications of generating series with $U= \C$, the formal parameter $z$ can be taken to be a complex number in some domain on which the generating series converges. 
However, the formal delta series $\delta^{(i)}(z)$  diverges everywhere on the complex plane.
Nevertheless,  if we decompose $\delta^{(i)}(z)=\delta^{(i)}(z)_{+}+\delta^{(i)}(z)_{-}$ with 
\begin{align}
\delta^{(i)}(z)_{+}= \sum_{n \ge 0} \binom{n}{i}z^{-n-1},\qquad 
\delta^{(i)}(z)_{-} = \sum_{n \le -1} \binom{n}{i}z^{-n-1},\label{deltapm}
\end{align}
where the $\pm$  subscripts refer to the sign of the sequence index $n$.
Then the  series $\delta^{(i)}(z)_{+}$ and $\delta^{(i)}(z)_{-}$ converge on disjoint complex domains as follows:
\begin{align*}
\delta^{(i)}(z)_{+} = \frac{1}{(z-1)^{i+1}},\quad |z|>1,\qquad
\delta^{(i)}(z)_{-} =\frac{-1}{(z-1)^{i+1}},\quad |z|<1.
\end{align*}
We  utilise these expansions for  formal $z$ by adopting the following convention:
\begin{definition}[Expansion Convention]\label{def:formexp}
For $m\in \Z$ and formal variables $x,y$  we define
\begin{align}
(x+y)^{m}=\sum_{k\ge 0}\binom{m}{k} x^{m-k}y^k, \label{eq:xypower}
\end{align}
i.e. we expand in the second variable.
For $m\ge 0$, $(x+y)^{m}=(y+x)^{m}$, with a finite sum, whereas for $m<0$, 
$(x+y)^{m}$ and $(y+x)^{m}$ are distinct infinite  series.
\end{definition}
Following this convention we  write 
\begin{align}\label{deltapm2}
\delta^{(i)}(z)_{+}= \frac{1}{(z-1)^{i+1}},\qquad
\delta^{(i)}(z)_{-} =-\frac{1}{(-1+z)^{i+1}},
\end{align} 
so that
\[
\delta^{(i)}(z)= \frac{1}{(z-1)^{i+1}}-\frac{1}{(-1+z)^{i+1}}.
\]
We may similarly decompose any formal series as $\alpha(z)=\alpha(z)_{+}+\alpha(z)_{-}$ where 
\begin{align}\label{eq:alphapm}
\alpha(z)_{+}= \sum_{n \ge 0} \alpha_{n} z^{-n-1},\qquad 
\alpha(z)_{-} = \sum_{n \le -1}  \alpha_{n} z^{-n-1}.
\end{align}
The $\pm$ subscripts   refer to the sign of the sequence index $n$.\footnote{We use the convention usually adopted in physics which is opposite to that chosen in  \cite{K}.}
Thus Lemma~\ref{lem:kerDelGen}  and \eqref{deltapm2} imply $\alpha\in\ker(\Delta^N )$ iff
\begin{align*}
\alpha(z)_{+} =\sum_{i=0}^{N-1}\frac{R_i}{(z-1)^{i+1}},\qquad 
\alpha(z)_{-} =-\sum_{i=0}^{N-1}\frac{R_i}{(-1+z)^{i+1}}.
\end{align*}
\medskip 
Altogether the following theorem summarises our discussion thus far.
\begin{theorem} 
\label{theor:alphaseq}
 Let $\alpha\in U^{\Z}$ with formal series $\alpha(z)$ and let
\[
R_i=\left(\Delta^i\alpha\right)_{0}=\Res_z(z-1)^i\alpha(z) \in U,
\]
for integers $i\ge 0$.  
The following are equivalent:
\begin{enumerate}[(i)]
\setlength\itemsep{2mm}
  \item $\alpha\in \ker \Delta^N$,
	\item $(z-1)^N\alpha(z)=0$,
	\item  $\alpha_n =\sum\limits_{i=0}^{N-1} \displaystyle{\binom{n}{i}}R_i$ for all $n\in\Z$,
	\item $\alpha_n=p_{N-1}(n)$, a degree $N-1$ polynomial in $n$ with coefficients in $U$,
	\item $\alpha(z) =\sum\limits_{i=0}^{N-1} R_i\delta^{(i)}(z)$,
	\item $\alpha(z)_{+} =\sum\limits_{i=0}^{N-1}\dfrac{R_i}{(z-1)^{i+1}}$ and 
	$\alpha(z)_{-} =-\sum\limits_{i=0}^{N-1}\dfrac{R_i}{(-1+z)^{i+1}}$.  
\end{enumerate}
\end{theorem}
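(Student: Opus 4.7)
The plan is to assemble the theorem by stitching together the preceding results, since essentially every equivalence has already been established along the way. I would begin by observing that the two descriptions of $R_i$ given in the statement agree: $R_i = (\Delta^i\alpha)_0 = \Res_z (z-1)^i\alpha(z)$ by \eqref{eq:Resalphai}, so no ambiguity arises when $R_i$ is invoked in clauses (iii), (v), and (vi).

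Next I would cycle through the equivalences centred on (i). The implication (i) $\Leftrightarrow$ (ii) is immediate from \eqref{eq:DeliGF}, which states that the formal series of $\Delta^N\alpha$ is $(z-1)^N\alpha(z)$, and a sequence vanishes iff its formal series does. Then (i) $\Leftrightarrow$ (iii) is essentially Proposition~\ref{prop:kerDel}, with the mild extra observation that for $\alpha\in\ker\Delta^N$ all higher forward differences $\Delta^i\alpha$ with $i\ge N$ also vanish (since $\Delta^i=\Delta^{i-N}\circ\Delta^N$), so the a priori infinite sum in \eqref{eq:alpha_poly} truncates at $i=N-1$. The equivalence (i) $\Leftrightarrow$ (iv) is Corollary~\ref{cor:poly}, and (i) $\Leftrightarrow$ (v) is Lemma~\ref{lem:kerDelGen}.

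The final step is (v) $\Leftrightarrow$ (vi), which follows by decomposing each $\delta^{(i)}(z)=\delta^{(i)}(z)_{+}+\delta^{(i)}(z)_{-}$ using the explicit formulas \eqref{deltapm2}, together with the splitting \eqref{eq:alphapm} of $\alpha(z)$. Since $\alpha(z)_+$ and $\alpha(z)_-$ involve disjoint ranges of powers of $z$, equality of the two sides of (v) is equivalent to equality of their positive and negative parts separately, which is exactly (vi).

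I do not anticipate a genuine obstacle. The substantive content lives in Proposition~\ref{prop:kerDel} (Newton's forward difference formula), and everything that remains is a matter of translating between the sequence, polynomial, and formal-series viewpoints already developed in this section. If anything warrants care, it is the truncation argument promoting the a priori infinite sums in \eqref{eq:alpha_poly} and \eqref{eq:alpha_delta} to the finite sums displayed in (iii) and (v), and the uniqueness of the $\pm$-decomposition used for (v) $\Leftrightarrow$ (vi).
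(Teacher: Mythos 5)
Your proposal is correct and follows essentially the same route as the paper, which presents this theorem as a summary of the preceding discussion (Proposition~\ref{prop:kerDel}, Corollary~\ref{cor:poly}, Lemma~\ref{lem:kerDelGen}, \eqref{eq:DeliGF}, \eqref{eq:Resalphai} and \eqref{deltapm2}) rather than giving a separate argument. Your added care about the truncation of the sums for $i\ge N$ and the uniqueness of the $\pm$-decomposition is consistent with, and slightly more explicit than, what the paper records.
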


\subsection{Calculus of Formal Series}
We gather a compendium of  results concerning formal series that we make use of later. 

\begin{lemma}[Taylor's Theorem] \label{lem:Taylor}
For formal series $\alpha(z)$ then $\alpha(x+y)$  has formal Taylor expansion in $y$
\begin{align}
\alpha(x+y) = e^{y \partial}\alpha(x),
\label{eq:Taylor}
\end{align}
where $e^{y \partial}:=\sum_{i\ge 0} y^i\,\partial^{(i)}$.
\end{lemma}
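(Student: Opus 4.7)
The plan is to expand both sides as doubly-indexed formal series in the monomials $x^{-n-1-i}y^{i}$ (for $n\in\Z$, $i\ge 0$) and check that they agree term by term. The only real content is a generalised power rule for $\partial$ acting on integer powers of $x$.

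First I would establish the auxiliary identity
\[
\partial^{(i)} x^{m} = \binom{m}{i} x^{m-i}, \qquad m\in\Z,\ i\ge 0.
\]
For $i=1$ this is a direct consequence of the definition \eqref{delz} (applied to the series whose only nonzero component is in degree $-m-1$), and the general case follows by induction on $i$ together with the Pascal-style identity $\binom{m}{i-1}\frac{m-i+1}{i} = \binom{m}{i}$, or equivalently by invoking the Leibniz rule \eqref{eq:Leibniz} repeatedly. Crucially this formula is valid for all integers $m$, including negative ones, which matches the scope of the expansion convention in Definition~\ref{def:formexp}.

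Next I would compute the right-hand side of \eqref{eq:Taylor}. By linearity of $\partial^{(i)}$ in each coefficient,
\[
\partial^{(i)}\alpha(x) = \sum_{n\in\Z} \alpha_{n}\,\partial^{(i)} x^{-n-1}
= \sum_{n\in\Z} \binom{-n-1}{i}\alpha_{n}\, x^{-n-1-i},
\]
so that
\[
e^{y\partial}\alpha(x) = \sum_{i\ge 0}\sum_{n\in\Z} \binom{-n-1}{i}\alpha_{n}\, x^{-n-1-i} y^{i}.
\]
On the other hand, using Definition~\ref{def:formexp} with $m=-n-1$,
\[
\alpha(x+y) = \sum_{n\in\Z}\alpha_{n}(x+y)^{-n-1}
= \sum_{n\in\Z}\alpha_{n}\sum_{i\ge 0}\binom{-n-1}{i} x^{-n-1-i} y^{i}.
\]
The two expressions are manifestly equal: for each fixed $n\in\Z$ the inner sum over $i$ is a well-defined formal series in $y$ (only nonnegative powers appear), and the outer sum over $n$ is a formal Laurent series in $x$, so swapping the order of summation is harmless.

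The only potential obstacle is ensuring that the expansion convention used implicitly on the left-hand side of \eqref{eq:Taylor} matches the one generated on the right by iterating $\partial$; the auxiliary identity above is precisely what reconciles them, and no subtler issue arises because the double sum is already organised by monomials $x^{-n-1-i}y^{i}$ with $i\ge 0$.
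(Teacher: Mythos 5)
Your proof is correct and follows essentially the same route as the paper: both rest on the expansion convention of Definition~\ref{def:formexp} applied to $(x+y)^{-n-1}$ together with the identity $\partial^{(i)}x^{m}=\binom{m}{i}x^{m-i}$, the only difference being that you compute both sides separately and compare, whereas the paper transforms the left-hand side directly into the right-hand side.
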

 \begin{proof} 
Using  the formal expansion convention Definition~\ref{def:formexp} we find 
\begin{align*}
\alpha(x+y) =& \sum_{n\in \Z}\alpha_{n} (x+y)^{-n-1}=
 \sum_{n\in \Z}\alpha_{n}\sum_{i\ge 0}\binom{-n-1}{i} x^{-n-1-i}y^i\\
=& \sum_{n\in \Z}\alpha_{n}\sum_{i\ge 0}{y^i}\partial^{(i)} \left(x^{-n-1}\right)
= \sum_{i\ge 0} {y^i}\,\partial^{(i)}\alpha(x). 
\end{align*} 
\end{proof} 
 \begin{lemma}
For  $\alpha(z)_{\pm }$ of \eqref{eq:alphapm} we have
\begin{align}
\partial\left(\alpha(z)_{\pm }\right) =\left (\partial\alpha(z)\right)_{\pm }.
\label{eq:delpm}
\end{align}
\end{lemma}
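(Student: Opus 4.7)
The plan is to verify the identity by direct computation, unpacking the definitions of $\partial$ and of $\alpha(z)_{\pm}$. From \eqref{delz} the formal derivative shifts the index and multiplies by $-n$, so $(\partial\alpha)_{n}=-n\,\alpha_{n-1}$. Hence $(\partial\alpha(z))_{+}=\sum_{n\ge 0}(-n\alpha_{n-1})z^{-n-1}$ and $(\partial\alpha(z))_{-}=\sum_{n\le -1}(-n\alpha_{n-1})z^{-n-1}$.

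On the other hand, applying $\partial$ term-by-term to $\alpha(z)_{+}=\sum_{n\ge 0}\alpha_{n}z^{-n-1}$ and re-indexing via $m=n+1$ yields $\sum_{m\ge 1}(-m)\alpha_{m-1}z^{-m-1}$. The crucial point is that the missing $m=0$ term would have coefficient $-0\cdot\alpha_{-1}=0$, so the sum extends to $m\ge 0$ without change, giving exactly $(\partial\alpha(z))_{+}$. The same re-indexing applied to $\alpha(z)_{-}=\sum_{n\le -1}\alpha_{n}z^{-n-1}$ produces $\sum_{m\le 0}(-m)\alpha_{m-1}z^{-m-1}$, and again the $m=0$ summand vanishes, leaving $\sum_{m\le -1}(-m)\alpha_{m-1}z^{-m-1}=(\partial\alpha(z))_{-}$.

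There is no real obstacle here; the only subtlety is tracking how the index shift $n\mapsto n+1$ induced by $\partial$ threatens to move a term across the boundary between the positive and negative parts. The factor $-n$ in the derivative formula annihilates precisely the would-be boundary term, which is what makes $\partial$ preserve the decomposition. The argument is thus a short bookkeeping calculation.
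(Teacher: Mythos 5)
Your computation is correct: the paper states this lemma without any proof, and your direct verification by unpacking \eqref{delz} and re-indexing is exactly the intended argument. The one point of substance --- that the factor $-n$ annihilates the would-be boundary term at $n=0$, so the index shift cannot move a term across the $\pm$ divide --- is correctly identified and handled.
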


\begin{lemma} [Residue Theorem]\label{lem:Respm}
Let $\alpha(z)$ be a formal series. For integer $k\ge 0$ we have
\begin{align}
\Res_{x}\frac{\alpha(x)}{(x-z)^{k+1}}=\partial^{(k)}\alpha(z)_{-},\qquad 
\Res_{x}\frac{\alpha(x)}{(-z+x)^{k+1}}=-\partial^{(k)}\alpha(z)_{+}.
\label{eq:Reskpm}
\end{align}
\end{lemma}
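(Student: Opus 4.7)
The plan is to expand the rational integrands using the formal Laurent expansion convention of Definition~\ref{def:formexp}, extract the coefficient of $x^{-1}$ (i.e., take the residue in $x$), and identify the result directly as the series for $\partial^{(k)}\alpha(z)_{\pm}$. Both identities then reduce to a reindexing check.

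For the first identity, I apply Definition~\ref{def:formexp} to get $(x-z)^{-k-1}=\sum_{j\ge 0}\binom{-k-1}{j}(-1)^j x^{-k-1-j}z^j$, so that
\[
\frac{\alpha(x)}{(x-z)^{k+1}}=\sum_{n\in\Z}\sum_{j\ge 0}\alpha_n\binom{-k-1}{j}(-1)^j x^{-n-k-2-j}z^j.
\]
Extracting the coefficient of $x^{-1}$ forces $n=-k-1-j$ with $j\ge 0$, so $n\le -1$ and only the components of $\alpha(z)_{-}$ contribute. Using the elementary identity $(-1)^j\binom{-k-1}{j}=\binom{k+j}{k}$, the residue becomes $\sum_{j\ge 0}\alpha_{-k-1-j}\binom{k+j}{k}z^j$. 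On the other hand, iterating \eqref{delz} yields $\partial^{(k)}\alpha(z)_{-}=\sum_{n\le -1}\alpha_n\binom{-n-1}{k}z^{-n-1-k}$, and the substitution $j=-n-k-1$ turns this into the same sum.

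The second identity is parallel. Expanding $(-z+x)^{-k-1}$ in the second variable $x$ gives $\sum_{j\ge 0}\binom{-k-1}{j}(-z)^{-k-1-j}x^j$; now the coefficient of $x^{-1}$ in $\alpha(x)(-z+x)^{-k-1}$ selects $n=j\ge 0$, so only $\alpha(z)_{+}$ contributes. Using $(-z)^m=(-1)^m z^m$ together with the same binomial identity, the accumulated sign is $(-1)^{-k-1-j}(-1)^j\binom{-k-1}{j}=-(-1)^k\binom{k+j}{k}$. Since $\partial^{(k)}\alpha(z)_{+}=(-1)^k\sum_{n\ge 0}\alpha_n\binom{n+k}{k}z^{-n-1-k}$ by the same differentiation rule, the residue equals $-\partial^{(k)}\alpha(z)_{+}$.

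The computation is essentially routine; the only delicate point is bookkeeping of signs, specifically the convention $(-z)^m=(-1)^m z^m$ for $m\in\Z$ and the consistent use of $\binom{-k-1}{j}=(-1)^j\binom{k+j}{k}$. Once these are handled, matching the residue with the differentiated series is a direct reindexing and no further structural ingredient is needed.
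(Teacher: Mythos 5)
Your computation is correct: both residues are evaluated by expanding the kernel with the convention of Definition~\ref{def:formexp}, extracting the coefficient of $x^{-1}$, and matching against $\partial^{(k)}\alpha(z)_{\pm}$; I checked the sign bookkeeping (in particular $(-1)^{j}\binom{-k-1}{j}=\binom{k+j}{k}$ and the overall factor $(-1)^{k+1}$ in the second identity) and it all works out. The paper takes a slightly different route: it carries out the expansion only in the base case $k=0$, where no binomial coefficients appear at all (one just sums a geometric-type series to get $\Res_x\frac{\alpha(x)}{x-z}=\alpha(z)_{-}$ and $\Res_x\frac{\alpha(x)}{-z+x}=-\alpha(z)_{+}$), and then obtains the general $k$ by applying $\partial_z^{(k)}$ to both sides, using $\partial_z^{(k)}(x-z)^{-1}=(x-z)^{-k-1}$ together with the fact \eqref{eq:delpm} that $\partial$ commutes with the truncations $(\cdot)_{\pm}$. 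The paper's version buys a shorter argument with no binomial identities, at the cost of invoking the auxiliary lemma \eqref{eq:delpm}; your direct expansion is self-contained and makes the role of the expansion convention for general $k$ fully explicit, at the cost of heavier index and sign manipulation. Both are valid proofs of the same statement.
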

 \begin{proof} 
Consider 
\begin{align*}
\Res_{x}\, \frac{\alpha(x)}{x-z}=& \Res_{x} \sum_{n\in \Z}\sum_{r\ge 0} \alpha_{n} x^{-n-r-2}z^r=\sum_{r\ge 0} \alpha_{-r-1}\, z^r=\alpha(z)_{-}.
\end{align*}
Similarly we find $\Res_{x}\dfrac{\alpha(x)}{-z+x}=-\alpha(z)_{+}$. Thus \eqref{eq:Reskpm} holds for $k=0$.
The general result follows on applying $\partial_z^{(k)}$ and using \eqref{eq:delpm}. 
\end{proof}

\bigskip

\section{Locality}  \label{Chapt_Locality}
  \subsection{Locality of Formal Series}
Let us now assume that $U$ is  an associative algebra over a field of characteristic zero i.e. $U$ is a vector space equipped with an associative bilinear product $AB\in U$ for all $A,B\in U$. 
In the next section we will consider $U$ to be the algebra of endomorphisms of a vector space $V$.

We define the formal product and commutator for formal generating series $\alpha(z),\beta(z)\in U[[z, z^{-1}]]$ by 
\begin{align}
 \alpha(x)\beta(y)&= \sum_{m,n\in \Z}\alpha_{m}\beta_{n}x^{-m-1}y^{-n-1}
\in U[[x, x^{-1}, y, y^{-1}]],
\label{eq:prodab}\\
\left[\alpha(x),\beta(y)\right] &= \sum_{m,n\in \Z}[\alpha_{m},\beta_{n}] x^{-m-1}y^{-n-1}
\in U[[x, x^{-1}, y, y^{-1}]],\;\;
\label{eq:comab}
\end{align}
for independent indeterminates $x$ and $y$ and commutator $[\alpha_{m},\beta_{n}]=\alpha_{m}\beta_{n}-\beta_{n}\alpha_{m}$. 
In the language of Chapter~\ref{Chapt_FDiff}, the bivariate series 
\eqref{eq:prodab} and \eqref{eq:comab} are  generating series for  \emph{doubly indexed }sequences $\{\alpha_{m}\beta_{n}\}_{m,n\in\Z}$ and $\{[\alpha_{m},\beta_{n}]\}_{m,n\in\Z}$,
respectively.
\medskip

We now define the fundamental notion of \emph{locality} -- one of the most important  properties enjoyed by vertex operators \cite{Li},\cite{G},\cite{LZ}. 
For formal series  $\alpha(x),\beta(y)$ and integers $n\ge 0$ we define the formal bivariate  series \footnote{This is a well-defined formal series in $x$ and $y$ since $n\ge 0$.}
\begin{align}
C^{n}(\alpha(x),\beta(y))=(x-y)^{n}[\alpha(x),\beta(y)].
\label{eq:Cxy0}
\end{align}
\begin{definition} [Locality]  \label{def:loc}
$\alpha(z), \beta(z) \in U[[z, z^{-1}]]$ are called \emph{mutually local}
if for some integer $n\ge 0$ 
\begin{align}\label{local1def}
C^{n}(\alpha(x),\beta(y))= 0.
\end{align}
\end{definition}
The \emph{order of locality}  of $\alpha(z)$ and $\beta(z)$ is the \emph{least} integer $n=N\ge 0$ for which \eqref{local1def} holds, in which case
we say that $\alpha(z)$ and $\beta(z)$ are \emph{mutually local of order $N$}
 and write $\alpha(z) \loc{N} \beta(z)$ (or simply $\alpha(z)\sim \beta(z)$
 if $N$ is not specified). 
We also say that $\alpha(z)$ is  \emph{local} if $\alpha(z) \sim \alpha(z)$. 
\begin{lemma}\label{lem:delalpha}
If $\alpha(z) \loc{N} \beta(z)$ then  $\partial\alpha(z) \loc{N+1}  \beta(z)$.
\end{lemma}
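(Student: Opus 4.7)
The plan is to differentiate the locality identity in the $x$ variable and then rebalance the resulting expression by multiplying by an extra factor of $(x-y)$. Since $\beta(y)$ does not involve $x$, the bivariate formal derivative $\partial_x$ passes through the commutator bracket, so $\partial_x[\alpha(x),\beta(y)]=[\partial\alpha(x),\beta(y)]$; in particular this is a well-defined formal series manipulation. By hypothesis we have
\begin{align*}
(x-y)^{N}[\alpha(x),\beta(y)]=0.
\end{align*}

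Applying $\partial_x$ and using the Leibniz rule \eqref{eq:Leibniz} (applied to the polynomial factor $(x-y)^{N}$, regarding $y$ as a constant scalar), we obtain
\begin{align*}
N\,(x-y)^{N-1}[\alpha(x),\beta(y)]+(x-y)^{N}[\partial\alpha(x),\beta(y)]=0.
\end{align*}
Now I would multiply both sides by $(x-y)$. The first term becomes $N\,(x-y)^{N}[\alpha(x),\beta(y)]$, which vanishes by the original locality assumption. What remains is
\begin{align*}
(x-y)^{N+1}[\partial\alpha(x),\beta(y)]=0,
\end{align*}
which is exactly the statement $\partial\alpha(z)\loc{N+1}\beta(z)$.

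There is no real obstacle here: the only thing to verify carefully is that the Leibniz rule and the commutation of $\partial_x$ with the commutator bracket are legitimate at the level of formal bivariate series in $U[[x,x^{-1},y,y^{-1}]]$, which is immediate from the coefficient-by-coefficient definitions \eqref{delz} and \eqref{eq:comab}. The trick of multiplying through by $(x-y)$ (rather than trying to differentiate directly and land on $(x-y)^{N}$) is what keeps us within the class of well-defined formal series and explains why the order of locality can increase by at most one under $\partial$.
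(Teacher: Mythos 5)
Your proof is correct and is essentially the paper's argument with the two steps performed in the opposite order: the paper differentiates $C^{N+1}(\alpha(x),\beta(y))=(x-y)^{N+1}[\alpha(x),\beta(y)]=0$ directly, obtaining $(N+1)C^{N}(\alpha(x),\beta(y))+C^{N+1}(\partial\alpha(x),\beta(y))=0$ and concluding the same way, which avoids ever writing $(x-y)^{N-1}$. The only cosmetic point in your version is that for $N=0$ the intermediate term $N\,(x-y)^{N-1}[\alpha(x),\beta(y)]$ momentarily involves a negative power of $(x-y)$ multiplying a doubly infinite series; this is harmless because its coefficient is $N=0$ (and the $N=0$ case can be handled directly), but differentiating the $(N+1)$-st power as the paper does sidesteps the issue entirely.
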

\begin{proof}
 $0=\partial_x C^{N+1}(\alpha(x),\beta(y))=NC^{N}(\alpha(x),\beta(y))+C^{N+1}(\partial\alpha(x),\beta(y))$.
\end{proof}
We define the \emph{$n^{\nth}$ residue product} ${*}_{n}$ 
  for $n\ge 0$ of formal series $\alpha(z),\beta(z)$ to be the formal series
 \footnote{The $n^{\nth}$ residue product is often also notated by $\alpha(z)_{(n)}\beta(z)$ e.g. \cite{K,MN}.}
\begin{align}
\nprod{\alpha}{n}{\beta}(z) &=\Res_x C^{n}(\alpha(x),\beta(z)) 
=\sum_{k=0}^{n} \binom{n}{k} (-z)^k\, [\alpha_{n-k},\beta(z)].
\label{eq:anb}
\end{align} 
For $\alpha(z) \loc{N} \beta(z)$ it follows that 
\begin{align}
\nprod{\alpha}{n}{\beta}(z)=0\, \mbox{ for all }\, n\ge N.
\label{eq:nthprodzero}
\end{align}

The locality condition \eqref{local1def} is closely related to Theorem~\ref{theor:alphaseq} of the last section  for an appropriate choice of vector space and sequence.
Let $W=U[[ y, y^{-1}]]$ be the vector space of formal series in $y$ with coefficients in $U$.
If $\alpha(z) \loc{N} \beta(z)$ then
\begin{align}
0&=y^{1-N}C^{N}(\alpha(x),\beta(y)) =  \left(z-1\right)^N \sum_{n\in \Z} \gamma_{n}z^{-n-1},
\label{eq:zNalphay}
\end{align}
for $z=\dfrac{x}{y}=xy^{-1}$ and 
\begin{align}
\gamma_{n}= y^{-n}\,[\alpha_{n},\beta(y)]. 
\label{eq:gamman}
\end{align}
$\gamma_n$  
determines a sequence $\gamma\in W^{\Z}$ with formal series 
\begin{align*}
\gamma(z)&=\sum_{n\in \Z} \gamma_{n} z^{-n-1}=y \,[\alpha(yz),\beta(y)],
\end{align*}
where $\alpha(yz)=\sum_{n\in \Z} \alpha_{n}(yz)^{-n-1}$. 
But \eqref{eq:zNalphay} implies
\[
(z-1)^N \gamma(z)=0,
\]
which is Property~(ii) of Theorem~\ref{theor:alphaseq}. Therefore $\gamma$  satisfies the  equivalent properties following from  Newton's forward difference formula. 
In particular, Theorem~\ref{theor:alphaseq}~(i) implies  $\gamma\in \ker \Delta^N$ which together with Lemma~\ref{lem:Delta} implies that for all $n\in \Z$
\begin{align}
\sum_{i=0}^{N} \binom{N}{i} (-y)^i\, [\alpha_{n-i},\beta(y)]=0.
\label{eq:Delalphay}
\end{align}
Theorem~\ref{theor:alphaseq}~(iii) determines $\gamma$ in terms of the $N$ residues 
\begin{align*}
R_i(y)&=\Res_z(z-1)^i\gamma(z) = y^{-i}\, \nprod{\alpha}{i}{\beta}(y),
\end{align*}
for $i^{\nth}$ residue product \eqref{eq:anb} with $0\le i\le N-1$.
Thus Theorem~\ref{theor:alphaseq}~(iii) implies locality is equivalent to
\begin{align}
[\alpha_m,\beta(y)] =\sum\limits_{i=0}^{N-1} \binom{m}{i}y^{m-i}
\nprod{\alpha}{i}{\beta}(y).
\label{eq:amby}
\end{align}
In terms of components,  \eqref{eq:amby} reads
\begin{align}
[\alpha_m,\beta_n] =\sum\limits_{i=0}^{N-1} \binom{m}{i}\nprod{\alpha}{i}{\beta}_{m+n-i}.
\label{eq:ambn}
\end{align}
Theorem~\ref{theor:alphaseq}~(iv)--(vi) describe further corresponding  properties equivalent  to locality. Recalling \eqref{eq:alphapm},   Theorem~\ref{theor:alphaseq}~(vi) implies
\[
[\alpha(x)_{+},\beta(y)] =\sum\limits_{i=0}^{N-1}\dfrac{\nprod{\alpha}{i}{\beta}(y)}{(x-y)^{i+1}},\qquad 
[\alpha(x)_{-},\beta(y)] =-\sum\limits_{i=0}^{N-1}\dfrac{\nprod{\alpha}{i}{\beta}(y)}{(-y+x)^{i+1}},
\]
employing the formal expansion convention \eqref{eq:xypower}.
Altogether we therefore find Theorem~\ref{theor:alphaseq} implies the following list of properties equivalent to locality \cite{K}:
\newpage
\begin{theorem}
\label{theor:ablocal}
Let $\alpha(z),\beta(z)\in U[[z, z^{-1}]]$  and  let  $\nprod{\alpha}{i}{\beta}(z)$ be the $i^{\nth}$ residue  product for $i\ge 0$.   
The following are equivalent:
\begin{enumerate}[(i)]
\setlength\itemsep{2mm}
\item  $\alpha(z) \loc{N} \beta(z)$,
  \item $\sum\limits_{i=0}^{N} \displaystyle{\binom{N}{i}}(-y)^i\, [\alpha_{n-i},\beta(y)]=0$ for all $n\in\Z$,
	\item  $[\alpha_m,\beta(y)] =\sum\limits_{i=0}^{N-1} \displaystyle{\binom{m}{i}}y^{m-i}\,\nprod{\alpha}{i}{\beta}(y) $,
	\item $[\alpha_m,\beta_n] =\sum\limits_{i=0}^{N-1} \displaystyle{\binom{m}{i}}\nprod{\alpha}{i}{\beta}_{m+n-i}$,
	\item $y^{-m}[\alpha_m,\beta(y)] =p_{N-1}(m)$, where $p_{N-1}(m)$ is a degree $N-1$ polynomial in $m$ with coefficients in $ U[[y, y^{-1}]]$,
	\item $[\alpha(x),\beta(y)]=\sum\limits_{i=0}^{N-1} y^{-i-1}\delta^{(i)}\left(\dfrac{x}{y}\right)\nprod{\alpha}{i}{\beta}(y)$,
	\item $[\alpha(x)_{+},\beta(y)] =\sum\limits_{i=0}^{N-1}\dfrac{\nprod{\alpha}{i}{\beta}(y)}{(x-y)^{i+1}}$ and 
	$[\alpha(x)_{-},\beta(y)] =-\sum\limits_{i=0}^{N-1}\dfrac{\nprod{\alpha}{i}{\beta}(y)}{(-y+x)^{i+1}}$. 
\end{enumerate}
\end{theorem}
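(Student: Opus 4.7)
The plan is to reduce the theorem to a direct application of Theorem~\ref{theor:alphaseq}, following the argument sketched in the discussion immediately preceding the statement. The central construction is to form the vector space $W = U[[y, y^{-1}]]$ and the sequence $\gamma \in W^{\Z}$ with components $\gamma_n = y^{-n}[\alpha_n, \beta(y)]$, whose formal generating series in a new indeterminate $z$ is
\[
\gamma(z) = \sum_{n \in \Z} \gamma_n z^{-n-1} = y\,[\alpha(yz), \beta(y)].
\]

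The first step is to establish that (i) is equivalent to $(z-1)^N \gamma(z) = 0$. Multiplying the locality condition $(x-y)^N[\alpha(x),\beta(y)] = 0$ by $y^{1-N}$ and setting $x = yz$ produces exactly condition (ii) of Theorem~\ref{theor:alphaseq} applied to $\gamma$; conversely, substituting $z = x/y$ recovers the original locality identity. This brings the full force of Theorem~\ref{theor:alphaseq} to bear on $\gamma$, and it remains only to translate each of its six clauses into the language of $\alpha$ and $\beta$.

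The translations proceed by identifying the residues as $R_i = R_i(y) = \Res_z (z-1)^i \gamma(z) = y^{-i}\,\nprod{\alpha}{i}{\beta}(y)$ via \eqref{eq:anb}. Clause (i) of Theorem~\ref{theor:alphaseq}, combined with Lemma~\ref{lem:Delta}, yields (ii) of Theorem~\ref{theor:ablocal}. Clause (iii) of Theorem~\ref{theor:alphaseq} gives (iii), and taking the coefficient of $y^{-n-1}$ on both sides yields the component form (iv). Clause (iv) gives (v). Clause (v), $\gamma(z) = \sum_{i=0}^{N-1} R_i\,\delta^{(i)}(z)$, becomes (vi) after substituting $z = x/y$ and dividing through by $y$, using that $y^{-i-1}\delta^{(i)}(x/y) = \sum_n \binom{n}{i} x^{-n-1} y^{n-i}$. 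Clause (vi) becomes (vii) by the same substitution applied to the $\pm$ decomposition.

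The main obstacle is the bookkeeping around the substitution $z = x/y$ inside the delta-series identities (vi) and (vii). One must verify that $\delta^{(i)}(x/y)$ and the $\pm$ expansions $(x-y)^{-i-1}$ and $-(-y+x)^{-i-1}$ (interpreted per Definition~\ref{def:formexp}) are well-defined elements of $U[[x, x^{-1}, y, y^{-1}]]$, and that the change of variable respects the splitting into the parts $\gamma(z)_{\pm}$ in precisely the manner needed to deliver (vii). Once this is checked, each equivalence follows mechanically from the corresponding clause of Theorem~\ref{theor:alphaseq}, and no further input is required beyond the preliminary calculus of formal series already developed.
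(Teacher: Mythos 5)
Your proposal is correct and follows essentially the same route as the paper: the paper's own argument (given in the discussion preceding the theorem) forms exactly the sequence $\gamma_n = y^{-n}[\alpha_n,\beta(y)]$ in $W=U[[y,y^{-1}]]$, identifies locality with $(z-1)^N\gamma(z)=0$ for $z=x/y$, computes $R_i(y)=y^{-i}\nprod{\alpha}{i}{\beta}(y)$, and reads off each clause from the corresponding clause of Theorem~\ref{theor:alphaseq}. The bookkeeping you flag around substituting $z=x/y$ is handled in the paper just as implicitly as in your sketch, so there is no gap relative to the paper's standard of rigour.
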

\medskip

We also define the   \emph{normally ordered product} of $\alpha(x)$ and $\beta(y)$ by
\begin{align}
\nord{\alpha(x)\beta(y)} =\alpha(x)_{-} \beta(y)+\beta(y)\alpha(x)_{+}.
\label{eq:normab}
\end{align}
Thus  we find 
\begin{align*}
\alpha(x)\beta(y)&=[\alpha(x)_{+},\beta(y)]\;+\nord{\alpha(x)\beta(y)},\\
\beta(y)\alpha(x)&=-[\alpha(x)_{-},\beta(y)]\;+\nord{\alpha(x)\beta(y)},
\end{align*}
which imply 
\begin{corollary}(OPE)
\label{cor:abnormal}
Let $\alpha(z),\beta(z)\in U[[z, z^{-1}]]$.    
Then $\alpha(z) \loc{N} \beta(z)$ if and only if
\begin{align*}
\alpha(x)\beta(y)&=\sum\limits_{i=0}^{N-1}\dfrac{\nprod{\alpha}{i}{\beta}(y)}{(x-y)^{i+1}}+\nord{\alpha(x)\beta(y)},\\
\beta(y)\alpha(x)&=\sum\limits_{i=0}^{N-1}\dfrac{\nprod{\alpha}{i}{\beta}(y)}{(-y+x)^{i+1}}+\nord{\alpha(x)\beta(y)}.
\end{align*}
\end{corollary}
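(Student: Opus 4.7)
The plan is to deduce this from part~(vii) of Theorem~\ref{theor:ablocal} together with two elementary identities that follow immediately from the definition \eqref{eq:normab} of the normally ordered product. Specifically, writing $\alpha(x)=\alpha(x)_{+}+\alpha(x)_{-}$ and rearranging, one obtains
\[
\alpha(x)\beta(y)=[\alpha(x)_{+},\beta(y)]+\nord{\alpha(x)\beta(y)},\qquad
\beta(y)\alpha(x)=-[\alpha(x)_{-},\beta(y)]+\nord{\alpha(x)\beta(y)},
\]
where in the first identity we have moved $\alpha(x)_{+}$ past $\beta(y)$ at the cost of a commutator, and in the second we have moved $\alpha(x)_{-}$ to the left. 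These identities hold unconditionally for any pair of formal series and are exactly the ones displayed just before the corollary statement.

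For the forward implication, assuming $\alpha(z)\loc{N}\beta(z)$, I would substitute the two expressions for $[\alpha(x)_{\pm},\beta(y)]$ given by Theorem~\ref{theor:ablocal}~(vii) into the two identities above. This yields the two claimed OPE formulas directly, with no further manipulation required.

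For the reverse implication, I would start from the two OPE equations in the corollary and subtract the displayed identities above to recover
\[
[\alpha(x)_{+},\beta(y)]=\sum_{i=0}^{N-1}\frac{\nprod{\alpha}{i}{\beta}(y)}{(x-y)^{i+1}},\qquad
[\alpha(x)_{-},\beta(y)]=-\sum_{i=0}^{N-1}\frac{\nprod{\alpha}{i}{\beta}(y)}{(-y+x)^{i+1}},
\]
which is precisely Theorem~\ref{theor:ablocal}~(vii), and hence gives $\alpha(z)\loc{N}\beta(z)$. Alternatively, subtracting the two OPE equations of the corollary themselves produces $[\alpha(x),\beta(y)]=\sum_{i=0}^{N-1} y^{-i-1}\delta^{(i)}(x/y)\nprod{\alpha}{i}{\beta}(y)$, which is condition~(vi) of Theorem~\ref{theor:ablocal}.

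There is no real obstacle here: the content is already packaged in Theorem~\ref{theor:ablocal}~(vii), and the only work is to rewrite commutators of the plus/minus parts in terms of ordinary products and the normally ordered product via \eqref{eq:normab}. The corollary is essentially a repackaging of part~(vii) using the definition of normal ordering.
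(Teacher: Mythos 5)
Your proposal is correct and follows essentially the same route as the paper: the paper also derives the two identities expressing $\alpha(x)\beta(y)$ and $\beta(y)\alpha(x)$ as $\pm[\alpha(x)_{\pm},\beta(y)]$ plus the normally ordered product, and then invokes Theorem~\ref{theor:ablocal}~(vii) to conclude in both directions. Your additional remark that subtracting the two OPE equations recovers condition~(vi) is a harmless alternative for the converse, but it is the same underlying argument.
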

\begin{remark}
The above expressions for $\alpha(x)\beta(y)$ and $\beta(y)\alpha(x)$ are related to the \emph{Operator Product Expansion~(OPE)}  
in chiral conformal field theory (e.g. \cite{BPZ},\cite{FMS})
\begin{align}
\alpha(x)\beta(y)\OPE \sum\limits_{i=0}^{N-1}\frac{\nprod{\alpha}{i}{\beta}(y)}{(x-y)^{i+1}},
\label{eq:abOPE}
\end{align}
to indicate the ``pole structure'' in the ``complex domain'' $|x|>|y|$. 
The remaining ``non-singular parts''  are not displayed since the pole terms determine the commutation relations of the components  in Theorem~\ref{theor:ablocal}~(iii)--(vii).
We also note that $N$, the order of locality,  determines the highest pole order.
\end{remark}

\subsection{Examples of Locality}
Suppose that $U$ is a Lie algebra where 
for all $u,v,w\in U$ the commutator  satisfies the Jacobi identity 
\begin{align}
[[u,v],w]+[[v,w],u]+[[w,u],v]=0.
\label{eq:Jacobi}
\end{align}
\subsubsection{The Heisenberg  Algebra} 
Consider the vector space
\[
\widehat{H}=\bigoplus_{n\in \Z}\C h_n\oplus \C K,
\]
 with basis $h_n$ and central element $K$ obeying the Lie algebra commutation relations:
\begin{align}
\left[h_{m},h_n\right]=m\delta_{m,-n}K,\quad \left[h_{m},K\right]=0.
\label{eq:Heisalg}
\end{align}
The formal series $h(z)=\sum_{n\in\Z}h_{n}z^{-n-1}$ obeys
\[
y^{-m}\left[h_{m},h(y)\right]=y^{-m}\sum_{n\in\Z}\left[h_{m},h_n\right]y^{-n-1}=K\, \binom{m}{1}y^{-1},
\]
 a degree 1 polynomial in $m$. Thus Property~(v) of Theorem~\ref{theor:ablocal} holds
(with $\alpha=\beta=h$) which implies $h(z) $ is local of order $N=2$. Equivalently, we have Property~(iii) of 
Theorem~\ref{theor:ablocal} with
\[
\nprod{h}{0}{h}(y)=0,\quad 
\nprod{h}{1}{h}(y)=K. 
\]
Considering the Lie algebra components $ h_n$  as being elements of the universal enveloping algebra of $\widehat{H}$ we obtain the OPE
\begin{align}
h(x)h(y)\OPE \frac{K}{(x-y)^{2}}.
\label{eq:BosOP}
\end{align}

This example is known in vertex algebra theory as the Heisenberg or free boson algebra  and in conformal field theory as the bosonic string e.g. \cite{P},\cite{FMS}.

\subsubsection{Affine Kac-Moody Algebras}
Let $\mathfrak{g}$ be a finite dimensional Lie algebra with Lie bracket $[\ ,\ ]$ equipped with an invariant symmetric, bilinear form
$\langle \ , \ \rangle: \mathfrak{g} \otimes \mathfrak{g} \rightarrow \C$  i.e. 
\begin{align}
\langle [a,b], c \rangle = \langle a, [b,c] \rangle,
\label{eq:Lieinv}
\end{align}
for all $a,b,c\in \mathfrak{g} $.
The \emph{affine Lie algebra} or \emph{Kac-Moody algebra}
associated to $(\mathfrak{g}, \langle \ , \ \rangle)$ is the vector space
\begin{align*}
\widehat{\mathfrak{g}} = \mathfrak{g} \otimes \C\left[t, t^{-1}\right] \oplus \C K = \bigoplus_{n\in \Z} \mathfrak{g} \otimes t^n \oplus \C K,
\end{align*}
with Lie algebra commutators 
\begin{align}
\notag
&\left[a \otimes t^m, b \otimes t^n\right] = \left[a, b\right] \otimes t^{m+n} + m \langle a, b\rangle \delta_{m, -n}K, \\
\label{eq:KMalg}
 &\left[a\otimes t^m, K \right]=0,
\end{align}
for all $a, b \in \mathfrak{g}$. 
Define the formal series $a(z)=\sum_{n\in\Z}a_{n}z^{-n-1}$ for 
$a\in\mathfrak{g}$ where
$
a_n:= a \otimes t^n$. 
Then we find that for all $a, b \in \mathfrak{g}$  
\[
y^{-m}\left[a_{m},b(y)\right]=\left [a, b\right](y) +K\langle a, b\rangle \,\binom{m}{1}y^{-1}.
\]
This is Property~(iii) of  Theorem~\ref{theor:ablocal} for $N=2$ with
\[
\nprod{a}{0}{b}(y)=\left [a, b\right](y),\quad 
\nprod{a}{1}{b}(y)=K\langle a, b\rangle. 
\]
Hence $a(z)$ and $b(z)$ are mutually local of order 2 if $\langle a, b\rangle\neq 0$, of order 1  if $\langle a, b\rangle=0 $ and  $[a,b]\neq 0$ and of order $0$ if
$\langle a, b\rangle=0 $ and  $[a,b]=0$.
With a suitable  universal enveloping algebra interpretation we obtain the OPE
\begin{align}
a(x)b(y)\OPE \frac{K\langle a, b\rangle}{(x-y)^{2}}+\frac{\left [a, b\right](y)}{x-y},
\label{eq:KMOP}
\end{align}
This  example is known as an affine Kac-Moody algebra theory or as a current algebra in conformal field theory. 
The Heisenberg algebra  \eqref{eq:Heisalg} corresponds to a 1-dimensional subalgebra generated by $a$ for which $\langle a , a \rangle\neq 0$.

\subsubsection{The Virasoro Algebra}
Consider the vector space
\[
\Vir=\bigoplus_{n\in \Z}\C L_n\oplus \C K,
\]
 with basis $L_n$ and central element $K$ obeying the Virasoro algebra with commutation relations:
\begin{align}
\left[L_{m},L_n\right]=(m-n)L_{m+n}+\frac{1}{2}K\binom{m+1}{3}\delta_{m+n,0},\quad \left[L_{m},K\right]=0.
\label{eq:Viralg}
\end{align}
(The factor of $\frac{1}{2}$ is conventional.)
Define the formal series \footnote{$\omega(z)$ is usually notated by $T(z)$ in conformal field theory and is called the energy momentum tensor.}
\begin{align}
\omega(z)=\sum\limits_{n\in\Z}L_{n}z^{-n-2}.
\label{eq:TVir}
\end{align} 
Note that $\omega(z)$ is the formal series for a sequence with components
\begin{equation*}
\omega_{n}=L_{n-1}.
\end{equation*}
Recalling  the formal derivative  \eqref{delz} it follows that 
\begin{align*}
y^{-m}\left[\omega_{m},\omega(y)\right]&=\sum_{n\in\Z}\left[(m-n)\omega_{m+n-1}+\frac{1}{2}K\binom{m}{3}\delta_{m+n,2}\right] y^{-m-n-1}
\\
&=\partial\omega(y)+2\omega(y)\, \binom{m}{1}y^{-1}+\frac{1}{2}K\,\binom{m}{3}y^{-3},
\end{align*}
a polynomial in $m$ of degree $3$. 
Hence $\omega(z) $ is local of order $4$ from Theorem~\ref{theor:ablocal}~(v) and  with a suitable universal enveloping algebra we obtain the OPE 
\begin{align}
\omega(x)\omega(y)\OPE \frac{\frac{1}{2}K}{(x-y)^{4}}
+\frac{2\omega(y)}{(x-y)^{2}}
+\frac{\partial\omega(y)}{x-y}.
\label{eq:VirOP}
\end{align}
\bigskip

\section{Creative Fields} 
\label{chapt:FCT}
\subsection{Fields} 
Let $V$ be a vector space over $\C$. 
We shall often refer to an element of $V$ as a \emph{state}.  
Let $\End(V)$ denote the algebra  of 
endomorphisms of $V$ i.e. linear maps from $V$ to $V$.
$\End(V)$ is an associative algebra with unit given by the identity map $I_V$ over the field $\C$ with bilinear product given by the composition of linear maps $AB=A\circ B$ for all $A,B\in \End(V)$. 
\medskip

Consider a formal series $\alpha(z)=\sum_{n\in\Z}\alpha_nz^{-n-1}$ with components $\alpha_n\in\End(V)$.
$\alpha(z)$ is called a  \emph{field}  
if for any $v\in V$ 
\begin{align}
\alpha_n v=0\ \mbox{for} \ n\gg 0,
\label{eq:lowtrunc}
\end{align}
 i.e. for $n$ sufficiently large.
The property \eqref{eq:lowtrunc} is called \emph{lower truncation}  and plays a vital role in vertex algebras. 

\medskip

For  fields  $\alpha(x),\beta(y)$ we may extend the definition of the bivariate formal series \eqref{eq:Cxy0} to all $n\in \Z$ by defining
\begin{align}
C^{n}(\alpha(x),\beta(y))=(x-y)^{n}\alpha(x)\beta(y)-(-y+x)^{n}\beta(y)\alpha(x).
\label{eq:Cxy}
\end{align}
Clearly, this agrees with \eqref{eq:Cxy0} for $n\ge 0$. In general, applying the formal expansion convention  
of \eqref{eq:xypower} we find 
\begin{align*}
C^{n}(\alpha(x),\beta(y))=\sum_{l,m\in\Z}
C^{n}_{lm}(\alpha,\beta)x^{-l-1}y^{-m-1},
\end{align*}
where for all $l,m,n\in\Z$
\begin{align}
C^{n}_{lm}(\alpha,\beta)=\sum_{i\ge 0}(-1)^i\binom{n}{i}
\left(
\alpha_{l+n-i}\beta_{m+i}-(-1)^n\beta_{m+n-i}\alpha_{l+i}
\right).
\label{eq:ctrs}
\end{align}

\begin{remark}
\label{rem:crst}
 By lower truncation, $C^{n}_{lm}(\alpha,\beta)v$  reduces to a finite sum of terms for each $v\in V$ and hence  $C^{n}(\alpha(x),\beta(y))$ is a well-defined formal bivariate series.
\end{remark}
\begin{lemma}
\label{lem:Cnxyk}
For all $k\ge 0$ and all $n\in \Z$ we have 
\begin{align}
(x-y)^kC^n(\alpha(x),\beta(y))=C^{n+k}(\alpha(x),\beta(y)).
\label{eq:Cnxyk}
\end{align}
\end{lemma}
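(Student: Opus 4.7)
The plan is to reduce the identity to two scalar identities in $\C[[x,x^{-1},y,y^{-1}]]$:
\[
(x-y)^k(x-y)^n = (x-y)^{n+k}, \qquad (x-y)^k(-y+x)^n = (-y+x)^{n+k},
\]
valid for all $k \ge 0$ and $n \in \Z$ under the expansion convention of Definition~\ref{def:formexp}. Once these are in hand, I would expand the definition \eqref{eq:Cxy} of both sides of \eqref{eq:Cnxyk} and distribute $(x-y)^k$ through the two summands $\alpha(x)\beta(y)$ and $\beta(y)\alpha(x)$; since $k \ge 0$ makes $(x-y)^k$ an honest polynomial, the distribution is legal and the resulting coefficients remain finite sums thanks to lower truncation (cf.\ Remark~\ref{rem:crst}).

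I would prove each scalar identity by a short induction on $k$, the base case $k=0$ being trivial. For the inductive step it suffices to verify the $k=1$ case. Expanding $(x-y)^n = \sum_{i \ge 0}\binom{n}{i}x^{n-i}(-y)^i$, multiplying by $(x-y)$, reindexing one of the two resulting sums, and applying Pascal's rule $\binom{n}{i}+\binom{n}{i-1}=\binom{n+1}{i}$ (with the convention $\binom{n}{-1}=0$) recovers $(x-y)^{n+1}$. The second identity follows from the identical computation after expanding $(-y+x)^n$ in $x$ and using that the polynomials $(x-y)$ and $(-y+x)$ agree.

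The main obstacle, though mild, is conceptual rather than computational: the hypothesis $k \ge 0$ cannot be dropped. The expansion convention treats $(x-y)^m$ and $(-y+x)^m$ as genuinely distinct infinite series when $m<0$, so formal multiplication by a negative power of $(x-y)$ would not preserve the two separate expansions appearing in \eqref{eq:Cxy}. With $k\ge 0$ the left-hand factor is a finite polynomial equal to both $(x-y)^k$ and $(-y+x)^k$, which is exactly what allows the Pascal telescoping to match simultaneously the two asymmetric terms on the right of \eqref{eq:Cnxyk}.
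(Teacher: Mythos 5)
Your proof is correct and follows the same route as the paper: both reduce the lemma to the two scalar identities $(x-y)^k(x-y)^n=(x-y)^{n+k}$ and $(x-y)^k(-y+x)^n=(-y+x)^{n+k}$ under the expansion convention, using that $(x-y)^k=(-y+x)^k$ for $k\ge 0$. The only difference is that you verify these identities explicitly by induction via Pascal's rule, whereas the paper treats them as immediate consequences of Definition~\ref{def:formexp}.
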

\begin{proof}
The expansion convention \eqref{eq:xypower} implies $(x-y)^k(x-y)^n=(x-y)^{n+k}$ and 
$(x-y)^k(-y+x)^n=(-y+x)^k(-y+x)^n=(-y+x)^{n+k}$ for $k\ge 0$.
\end{proof}
By Remark~\ref{rem:crst}, for fields $\alpha(x),\beta(y)$
we may similarly extend the definition of the $n^{\nth}$ residue product
${*}_{n}$   to  all $n\in \Z$ with
\begin{align}
\nprod{\alpha}{n}{\beta}(z)=
\Res_x C^{n}(\alpha(x),\beta(z))=
\sum_{i\ge 0}\binom{n}{i}
\left((-z)^i
\alpha_{n-i}\beta(z)-(-z)^{n-i}\beta(z)\alpha_{i}\right),
\label{eq:nthprod}
\end{align}
with components
\begin{align}
\nprod{\alpha}{n}{\beta}_{m}=C^{n}_{0m}(\alpha,\beta)=
\sum_{i\ge 0}(-1)^i\binom{n}{i}
\left(
\alpha_{n-i}\beta_{m+i}-(-1)^n\beta_{m+n-i}\alpha_{i}
\right).
\label{eq:nthassoc}
\end{align}
\begin{lemma}
\label{lem:nprodfield}
If $\alpha(z),\beta(z)$ are fields then
$\nprod{\alpha}{n}{\beta}(z)$ is a field for all $n\in\Z$. 
\end{lemma}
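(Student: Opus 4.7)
The plan is to work directly from the explicit component formula \eqref{eq:nthassoc}
\[
\nprod{\alpha}{n}{\beta}_{m}=
\sum_{i\ge 0}(-1)^i\binom{n}{i}\bigl(\alpha_{n-i}\beta_{m+i}-(-1)^n\beta_{m+n-i}\alpha_{i}\bigr),
\]
and verify the lower truncation property for the components of the $n^{\nth}$ residue product when applied to any state $v\in V$. First I would fix $v\in V$ and record the two thresholds guaranteed by lower truncation of $\alpha$ and $\beta$: integers $M_\alpha(v)$ and $M_\beta(v)$ such that $\alpha_i v=0$ for $i\ge M_\alpha(v)$ and $\beta_j v=0$ for $j\ge M_\beta(v)$. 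This reduces the nominally infinite sum in the formula above to a finite sum, because in the second piece $\alpha_i v$ vanishes once $i\ge M_\alpha(v)$, and in the first piece $\beta_{m+i}v$ vanishes once $i\ge M_\beta(v)-m$. So $\nprod{\alpha}{n}{\beta}_{m}v$ makes sense as a finite sum for every $m$, which is the content of Remark~\ref{rem:crst} in its vector form.

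Next I would show that by pushing $m$ large enough, both pieces vanish outright. For the first piece, as soon as $m\ge M_\beta(v)$ we have $\beta_{m+i}v=0$ for every $i\ge 0$, so that piece is identically zero. For the second piece, note that only the finitely many vectors $w_i:=\alpha_i v$ for $0\le i<M_\alpha(v)$ can contribute; since this collection is finite, $K:=\max_{0\le i<M_\alpha(v)}M_\beta(w_i)$ is a well-defined integer. Then $\beta_{m+n-i}w_i=0$ whenever $m+n-i\ge K$, which is ensured uniformly in $i$ as soon as $m\ge K-n+M_\alpha(v)$. Taking $m$ larger than both thresholds makes $\nprod{\alpha}{n}{\beta}_{m}v=0$, establishing lower truncation.

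The only subtle point, and the one I expect to be the main obstacle, is the case $n<0$, where the binomial coefficients $\binom{n}{i}$ never vanish and the sum in \eqref{eq:nthassoc} is genuinely infinite; one has to argue, as above, that lower truncation in the state variable forces the sum to collapse to a finite one for each $v$, and then that a \emph{uniform} bound across the finitely many intermediate vectors $\alpha_i v$ exists. For $n\ge 0$ the argument simplifies because the binomial coefficient already cuts the sum off at $i=n$, so the only thing to check is lower truncation of $\beta$ applied to $v$ and to the fixed vectors $\alpha_i v$, both of which are immediate. This finishes the proof.
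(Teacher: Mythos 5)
Your argument is correct and follows essentially the same route as the paper's proof: both work from the component formula \eqref{eq:nthassoc}, use lower truncation of $\alpha$ and $\beta$ to reduce each $\nprod{\alpha}{n}{\beta}_{m}v$ to a finite sum (the content of Remark~\ref{rem:crst}), and then push $m$ large to kill both pieces. Your write-up merely makes explicit the uniform threshold $K=\max_{0\le i<M_\alpha(v)}M_\beta(\alpha_i v)$ over the finitely many nonzero vectors $\alpha_i v$, a detail the paper compresses into ``for $m\gg 0$ for some $v$ dependent finite range of $i$.''
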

\begin{proof}
\eqref{eq:nthassoc} implies 
$\nprod{\alpha}{n}{\beta}(z)$ is a field provided
$C^{n}_{0m}(\alpha,\beta)v=0$ 
for any $v\in V$ for $m\gg 0$.  But $\beta_{m+i}v=0$ and  $\beta_{m+n-i}\alpha_{i}v=0$ for $m\gg 0$ for some $v$ dependent finite range of $i$ following  Remark~\ref{rem:crst}.
\end{proof} 

\noindent For $n<0$, $\nprod{\alpha}{n}{\beta}(z)$  is related to the normally ordered product  \eqref{eq:normab} as follows:
\begin{lemma}\label{lem:Resprodneg}
For fields $\alpha(x),\beta(z)$ and $k\ge 0$ we have
\begin{align}
\nprod{\alpha}{-k-1}{\beta}(z)=\nord{\partial^{(k)} \alpha(z)\beta(z)}.
\label{eq:abkmin1}
\end{align}
\end{lemma}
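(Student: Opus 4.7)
The plan is to compute both sides of \eqref{eq:abkmin1} directly from the definitions and match them term by term. Starting from \eqref{eq:nthprod} with $n=-k-1$, I would write
\begin{align*}
\nprod{\alpha}{-k-1}{\beta}(z)=\sum_{i\ge 0}\binom{-k-1}{i}\Bigl((-z)^i\alpha_{-k-1-i}\beta(z)-(-z)^{-k-1-i}\beta(z)\alpha_i\Bigr),
\end{align*}
and then simplify the two coefficient families using the standard negative-binomial identity $\binom{-k-1}{i}=(-1)^i\binom{k+i}{i}$. This gives $\binom{-k-1}{i}(-z)^i=\binom{k+i}{i}z^i$ in the first sum, while in the second sum the accumulated signs $(-1)^i(-1)^{-k-1-i}$ collapse to $-(-1)^k$, yielding $\binom{-k-1}{i}(-z)^{-k-1-i}=-(-1)^k\binom{k+i}{i}z^{-k-1-i}$.

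Next I would expand the right-hand side of \eqref{eq:abkmin1} using the definition \eqref{eq:normab} of the normally ordered product together with the commutation of $\partial^{(k)}$ with the $\pm$-decomposition from \eqref{eq:delpm}. Applying $\partial^{(k)}$ to $\alpha(z)=\sum_n \alpha_n z^{-n-1}$ gives $\partial^{(k)}\alpha(z)=\sum_n\binom{-n-1}{k}\alpha_n z^{-n-1-k}$; the identity $\binom{-n-1}{k}=(-1)^k\binom{n+k}{k}$ then produces
\begin{align*}
\partial^{(k)}\alpha(z)_{-}=\sum_{i\ge 0}\binom{k+i}{i}\alpha_{-k-1-i}z^i,\qquad
\partial^{(k)}\alpha(z)_{+}=(-1)^k\sum_{i\ge 0}\binom{k+i}{i}\alpha_{i}z^{-k-1-i},
\end{align*}
after re-indexing via $n=-i-1$ (respectively $n=i$) and the symmetry $\binom{k+i}{k}=\binom{k+i}{i}$.

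Comparing these with the two simplified sums from the first paragraph, the first sum in $\nprod{\alpha}{-k-1}{\beta}(z)$ is exactly $\partial^{(k)}\alpha(z)_{-}\beta(z)$, and the second sum (including its leading minus sign) becomes $\beta(z)\,\partial^{(k)}\alpha(z)_{+}$, since the formal scalar $z^{-k-1-i}$ commutes past $\beta(z)$. Adding them and invoking \eqref{eq:normab} gives $\nord{\partial^{(k)}\alpha(z)\beta(z)}$, as required. I should also note that Lemma~\ref{lem:nprodfield} guarantees $\nprod{\alpha}{-k-1}{\beta}(z)$ is itself a well-defined field, so no convergence subtleties arise in the rearrangement.

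The only real obstacle is the sign and binomial bookkeeping: keeping track of the two factors of $(-1)^i$, the $(-1)^{-k-1-i}=(-1)^{k+1+i}$, and the two applications of the negative-binomial identity so that the parity ultimately matches $(-1)^k$ on both sides. Once those are lined up, the identification of each piece with the corresponding half of $\nord{\partial^{(k)}\alpha(z)\beta(z)}$ is immediate.
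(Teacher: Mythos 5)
Your computation is correct, and all the sign and binomial bookkeeping checks out: the two simplified sums do reproduce $\bigl(\partial^{(k)}\alpha(z)\bigr)_{-}\beta(z)$ and $\beta(z)\bigl(\partial^{(k)}\alpha(z)\bigr)_{+}$, whose sum is $\nord{\partial^{(k)}\alpha(z)\beta(z)}$ by \eqref{eq:normab}. The route differs from the paper's, though: the paper's entire proof is an appeal to the Residue Theorem, Lemma~\ref{lem:Respm}, applied to the defining expression $\nprod{\alpha}{-k-1}{\beta}(z)=\Res_x\bigl[(x-z)^{-k-1}\alpha(x)\beta(z)-(-z+x)^{-k-1}\beta(z)\alpha(x)\bigr]$, where the two residues immediately give $\partial^{(k)}\alpha(z)_{-}\,\beta(z)$ and $+\beta(z)\,\partial^{(k)}\alpha(z)_{+}$. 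You instead start from the already-expanded mode form \eqref{eq:nthprod} and re-derive the content of Lemma~\ref{lem:Respm} inline via the negative-binomial identity $\binom{-k-1}{i}=(-1)^i\binom{k+i}{i}$. What the paper's version buys is brevity and reuse of an established tool; what yours buys is a self-contained, fully explicit verification that makes the parity $(-1)^k$ and the role of \eqref{eq:delpm} visible, at the cost of exactly the bookkeeping you flag. Your appeal to Remark~\ref{rem:crst}/Lemma~\ref{lem:nprodfield} to justify the rearrangement of the (now infinite) sum over $i$ is also appropriate.
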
 
\begin{proof}The result follows directly from Lemma~\ref{lem:Respm}. 
\end{proof} 

\begin{lemma}
\label{lem:dernprod}
$\partial$ is a derivation of the $n^{\nth}$  residue product of two fields $\alpha(z),\beta(z)$ i.e.  
\begin{align}
\partial \nprod{\alpha}{n}{\beta}(z)= \nprod{\partial\alpha}{n}{\beta}(z)+\nprod{\alpha}{n}{\partial \beta}(z).
\label{eq:dernprod}
\end{align}
\end{lemma}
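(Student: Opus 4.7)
The plan is to reduce the derivation property to a formal integration by parts in the variable $x$, working from the bivariate definition $\nprod{\alpha}{n}{\beta}(z)=\Res_x C^n(\alpha(x),\beta(z))$ in \eqref{eq:nthprod}, with $C^n$ as in \eqref{eq:Cxy}.

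First I would establish that, under the formal expansion convention \eqref{eq:xypower}, the formal derivatives satisfy
\[
\partial_x(x-z)^n = n(x-z)^{n-1}, \qquad \partial_z(x-z)^n = -n(x-z)^{n-1},
\]
for every $n\in\Z$, together with the analogous identities for $(-z+x)^n$. These follow termwise from \eqref{eq:xypower} and the identity $k\binom{n}{k}=n\binom{n-1}{k-1}$. Combined with the Leibniz rule \eqref{eq:Leibniz}, they yield the two key bivariate identities
\begin{align*}
\partial_x C^n(\alpha(x),\beta(z)) &= n\, C^{n-1}(\alpha(x),\beta(z)) + C^n(\partial\alpha(x),\beta(z)),\\
\partial_z C^n(\alpha(x),\beta(z)) &= -n\, C^{n-1}(\alpha(x),\beta(z)) + C^n(\alpha(x),\partial\beta(z)).
\end{align*}

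Next I would extract $\Res_x$ of each. For the first, the formal fundamental theorem of calculus \eqref{eq:FTC}, applied coefficient-by-coefficient in $z$ to the Laurent series in $x$, gives $\Res_x\partial_x C^n(\alpha(x),\beta(z))=0$, and hence
\[
\nprod{\partial\alpha}{n}{\beta}(z) = -n\,\nprod{\alpha}{n-1}{\beta}(z).
\]
For the second, $\partial_z$ commutes with $\Res_x$, so
\[
\partial\,\nprod{\alpha}{n}{\beta}(z) = -n\,\nprod{\alpha}{n-1}{\beta}(z) + \nprod{\alpha}{n}{\partial\beta}(z).
\]
Substituting the former into the latter yields the stated identity \eqref{eq:dernprod}.

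The main technical concern is ensuring validity for \emph{all} $n\in\Z$, particularly negative $n$, where $C^n$ is a genuine infinite series in $x$ by the expansion convention. This is handled by Remark~\ref{rem:crst}: lower truncation of the fields $\alpha(x),\beta(z)$ makes $C^n(\alpha(x),\beta(z))$ and $\partial_x C^n$ into well-defined bivariate formal series on any $v\in V$, so both the residues in $x$ and the vanishing $\Res_x\partial_x[\,\cdot\,]=0$ have unambiguous meaning. A pleasant byproduct of this argument is the auxiliary identity $\nprod{\partial\alpha}{n}{\beta}(z)=-n\,\nprod{\alpha}{n-1}{\beta}(z)$, itself one of the familiar translation covariance relations of vertex algebra theory.
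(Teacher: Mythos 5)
Your proof is correct and follows essentially the same route as the paper: both rest on differentiating $C^{n}(\alpha(x),\beta(z))$ in $x$ and $z$ and then killing the $\partial_x$ contribution with $\Res_x\partial_x(\cdot)=0$ from \eqref{eq:FTC}. The only cosmetic difference is that the paper applies $\partial_x+\partial_z$ at once, so the $\pm n\,C^{n-1}$ terms cancel already at the bivariate level, whereas you keep the two identities separate and pick up the (correct, and independently useful) byproduct $\nprod{\partial\alpha}{n}{\beta}(z)=-n\,\nprod{\alpha}{n-1}{\beta}(z)$ before recombining.
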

\begin{proof} 
From \eqref{eq:Cxy} we directly find 
\begin{align*}
\left(\partial_x + \partial_z\right)C^{n}(\alpha(x),\beta(z))=C^{n}(\partial\alpha(x),\beta(z))+C^{n}(\alpha(x),\partial\beta(z)).
\end{align*}
Taking $\Res_x$, the result follows since 
$\Res_x\partial_x C^{n}(\alpha(x),\beta(z))=0$  from \eqref{eq:FTC}.
\end{proof}

The next theorem is fundamental to the theory of vertex algebras.
It is often stated either as a foundational axiom \cite{B, FLM, Li} or else is proved subject to some further assumed properties \cite{Li, K, MN}. However, here we only assume that $\alpha(x),\beta(y)$ are local fields.
\begin{theorem}[Borcherds-Frenkel-Lepowsky-Meurmann identity]
\label{th:BFLM}
Let $\alpha(x),\beta(y)$ be    mutually local fields. Then  for all $l,m,n\in\Z$ we have 
\begin{align}
\sum_{i\ge 0}\binom{l}{i}\nprod{\alpha}{n+i}{\beta}_{l+m-i}=
\sum_{i\ge 0}(-1)^i\binom{n}{i}
\left(
\alpha_{l+n-i}\beta_{m+i}-(-1)^n\beta_{m+n-i}\alpha_{l+i}
\right).
\label{eq:BFLM}
\end{align}
\end{theorem}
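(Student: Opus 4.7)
The plan is as follows. Observe first that by \eqref{eq:ctrs} the right-hand side of \eqref{eq:BFLM} is exactly the coefficient $C^n_{lm}(\alpha,\beta)$ of $x^{-l-1}y^{-m-1}$ in the bivariate formal series $C^n(\alpha(x),\beta(y))$. So the identity reduces to
\[
\sum_{i\ge 0}\binom{l}{i}\nprod{\alpha}{n+i}{\beta}_{l+m-i}=C^n_{lm}(\alpha,\beta),
\]
and I would prove this by packaging both sides into generating series in $y$ and comparing.

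Forming the generating series $\sum_m(\text{LHS})\,y^{-m-1}$ and reindexing with $k=l+m-i$ converts it to $\sum_{i\ge 0}\binom{l}{i}\,y^{l-i}\,\nprod{\alpha}{n+i}{\beta}(y)$. Since $i\ge 0$, Lemma~\ref{lem:Cnxyk} allows the rewriting $\nprod{\alpha}{n+i}{\beta}(y)=\Res_x(x-y)^i C^n(\alpha(x),\beta(y))$. Pulling $\Res_x$ outside the $i$-sum and collecting the $(x-y)^i$ factors yields
\[
\Res_x\Bigl(\sum_{i\ge 0}\binom{l}{i}\,y^{l-i}(x-y)^i\Bigr)C^n(\alpha(x),\beta(y)).
\]
By Newton's binomial theorem applied under the expansion convention of Definition~\ref{def:formexp}, the inner sum collapses cleanly: $(y+(x-y))^l=\sum_{i\ge 0}\binom{l}{i}y^{l-i}(x-y)^i=x^l$. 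Hence $\sum_m(\text{LHS})\,y^{-m-1}=\Res_x\,x^l\,C^n(\alpha(x),\beta(y))=\sum_m C^n_{lm}(\alpha,\beta)\,y^{-m-1}$, and matching coefficients of $y^{-m-1}$ gives the identity.

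The main obstacle is to justify these rearrangements of sums and residues when $l$ or $n$ is negative, since then the $i$-sum is a priori infinite and lower truncation alone does not make it finite. This is exactly where mutual locality intervenes: the hypothesis $\alpha(z)\loc{N}\beta(z)$ combined with Lemma~\ref{lem:Cnxyk} gives $C^{n+i}(\alpha(x),\beta(y))=(x-y)^{n+i-N}C^N(\alpha(x),\beta(y))=0$ whenever $n+i\ge N$, so $\nprod{\alpha}{n+i}{\beta}(y)=0$ for all $i\ge\max(0,N-n)$. Thus the $i$-sum on the left-hand side is genuinely finite and every rearrangement above is a rearrangement of finitely many terms, so no convergence issue arises. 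In the extreme case $n\ge N$ both sides vanish outright by the same observation, and in all other cases the argument proceeds as described, completing the proof.
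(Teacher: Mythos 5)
Your overall strategy is the paper's: both sides are packaged as generating series in $y$, the left-hand side becomes $\sum_{i\ge 0}\binom{l}{i}y^{l-i}\nprod{\alpha}{n+i}{\beta}(y)$, the residue products are rewritten as $\Res_x(x-y)^iC^n(\alpha(x),\beta(y))$ via Lemma~\ref{lem:Cnxyk}, locality truncates the $i$-sum, and the case $n\ge N$ is dispatched separately. The gap is the step you describe as a clean collapse, $\sum_{i\ge 0}\binom{l}{i}y^{l-i}(x-y)^i=(y+(x-y))^l=x^l$. For $l\ge 0$ this is the ordinary finite binomial theorem and is fine. For $l<0$ it fails on both of the readings available to you. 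Read as a standalone formal series, the left-hand side is not even well defined: expanding $(x-y)^i$ shows that the coefficient of $x^ky^{l-k}$ is the divergent numerical series $\sum_{i\ge k}(-1)^{i-k}\binom{l}{i}\binom{i}{k}$, so the expansion convention of Definition~\ref{def:formexp} (stated for a pair of formal variables, not for the composite $x-y$) does not license the identity. Read instead as the finite sum that locality actually leaves you with, $\sum_{i=0}^{N-n-1}\binom{l}{i}y^{l-i}(x-y)^i$, the collapse is simply false: for $l=-1$ and $N-n=1$ the sum is $y^{-1}$, not $x^{-1}$. Your closing paragraph correctly observes that locality makes the sum finite, but does not notice that this truncation is exactly what destroys the binomial collapse you rely on; the two halves of your argument are in tension.

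The correct statement, and the one you actually need, is that the truncated sum multiplied by $C^n(\alpha(x),\beta(y))$ equals $x^lC^n(\alpha(x),\beta(y))$, and this uses the relation $(x-y)^{N-n}C^n(\alpha(x),\beta(y))=0$ in an essential way. This is precisely the content of Newton's forward difference formula for negative indices: apply Theorem~\ref{theor:alphaseq}~(iii) to the sequence $\gamma^n\in W^{\Z}$, $W=\End(V)[[y,y^{-1}]]$, with $(\gamma^n)_l=y^{-l}\sum_mC^n_{lm}(\alpha,\beta)y^{-m-1}$ and formal series $\gamma^n(z)=y\,C^n(\alpha(yz),\beta(y))\in\ker\Delta^{N-n}$; the nontrivial case $l<0$ is handled in the proof of Proposition~\ref{prop:kerDel}. (Alternatively you can repair your computation directly: multiply your truncated sum by $x^{-l}=\sum_{j=0}^{-l}\binom{-l}{j}y^{-l-j}(x-y)^j$, a genuine finite binomial identity since $-l>0$, and use the Vandermonde convolution $\sum_{i+j=s}\binom{l}{i}\binom{-l}{j}=\delta_{s,0}$ together with $(x-y)^sC^n(\alpha(x),\beta(y))=C^{n+s}(\alpha(x),\beta(y))=0$ for $s\ge N-n$ to conclude that the product equals $C^n(\alpha(x),\beta(y))$.) With that step supplied, your argument coincides with the paper's proof.
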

\begin{proof} 
Let $\alpha(z)\loc{N}\beta(z)$ for $N\ge 0$. Thus
\eqref{eq:BFLM} is the trivial identity $0=0$ for $n\ge N$ by locality and \eqref{eq:nthprodzero} so that we need only consider $n<N$. 
In a similar fashion to the proof of
Theorem~\ref{theor:ablocal}, the identity \eqref{eq:BFLM} is a consequence of Newton  forward differences applied to an appropriate  choice of sequence. 
Note that the right hand side of \eqref{eq:BFLM} is $C^{n}_{lm}(\alpha,\beta)$ of \eqref{eq:ctrs}.
For each $n<N$ we define a sequence $\gamma^{n}$ with components in $W:=\End(V)[[y,y^{-1}]]$ labelled by $l\in\Z$ as follows
\begin{align*}
\left(\gamma^{n}\right)_l=y^{-l}\sum_{m\in\Z}C^{n}_{lm}(\alpha,\beta)y^{-m-1},
\end{align*}
with formal series
\begin{align*}
\gamma^{n}(z)=y\, C^n(\alpha(yz),\beta(y)).
\end{align*} 
Since $\alpha(x)\loc{N}\beta(y)$ and using \eqref{eq:Cnxyk}   we find 
for each $n<N$ that
\begin{align*}
(z-1)^{N-n}\gamma^{n}(z)=y^{1+n-N}C^{N}(\alpha(yz),\beta(y))=0.
\end{align*}
Thus applying Newton's forward difference formula Theorem~\ref{theor:alphaseq}~(iii) 
we find
\begin{align}
\left(\gamma^{n}\right)_l=\sum_{i\ge 0}\binom{l}{i}R_i^{n},
\label{eq:gyNewt}
\end{align}
with  $R_i^{n}$ for $i\ge 0$ given by
\begin{align*}
R_i^{n}&=y \Res_z(z-1)^i C^{n}(\alpha(yz),\beta(y))
\\
&=y^{1-i} \Res_z  C^{n+i}(\alpha(yz),\beta(y))\\
&=y^{-i} \Res_x  C^{n+i}(\alpha(x),\beta(y))
=y^{-i}\nprod{\alpha}{n+i}{\beta}(y), 
\end{align*}
using \eqref{eq:Cnxyk} and that 
$\Res_z \rho(yz)=y^{-1}\Res_x \rho(x)$ 
for any formal series $\rho(x)$. 
We have therefore shown that $\left(\gamma^{n}\right)_l y^l $ is given by
\begin{align}
 \sum_{m\in\Z}C^{n}_{lm}(\alpha,\beta)y^{-m-1}
=\sum_{i\ge 0}\binom{l}{i}y^{l-i}\nprod{\alpha}{n+i}{\beta}(y).
\label{eq:BFLM2}
\end{align}
The result follows on computing the  coefficients of 
$y ^{-m-1}$ in \eqref{eq:BFLM2}. 
\end{proof} 
\medskip

\label{rem:BFLM}
\eqref{eq:BFLM}  specializes to the commutator formula Theorem~\ref{theor:ablocal}~(iv) for $n=0$ and to the  residue  product formula \eqref{eq:nthprod} when $l=0$.
There are a number of equivalent ways of writing \eqref{eq:BFLM}. 
\begin{proposition}
\label{prop:BFLM2}
Both of the following identities are equivalent to the Borcherds-Frenkel-Lepowsky-Meurmann identity:
\begin{align}
\sum_{i\ge 0}y^{-i-1}\delta^{(i)}\left(\frac{x}{y}\right)\nprod{\alpha}{n+i}{\beta}(y)&=
(x-y)^{n}\alpha(x)\beta(y)-(-y+x)^{n}\beta(y)\alpha(x),
\label{eq:BFLM3}
\\
y^{-1}\delta\left( \frac{x-z}{y}\right)\sum_{m\in \Z}\nprod{\alpha}{m}{\beta}(y)z^{-m-1}
&=z^{-1}\delta\left( \frac{x-y}{z}\right)\alpha(x)\beta(y)- z^{-1}\delta\left( \frac{-y+x}{z}\right)\beta(y)\alpha(x).
\label{eq:BFLM4}
\end{align}  
\end{proposition}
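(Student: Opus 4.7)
My plan is to prove both equivalences by passing between the three formulations via coefficient extraction and formal summation, making repeated use of the expansion convention of Definition~\ref{def:formexp} and the generating series \eqref{eq:deltai}.

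For \eqref{eq:BFLM}$\Leftrightarrow$\eqref{eq:BFLM3}, I would first observe that the right-hand side of \eqref{eq:BFLM3} equals $C^n(\alpha(x),\beta(y))$ by definition \eqref{eq:Cxy}, whose coefficient of $x^{-l-1}y^{-m-1}$ is precisely $C^n_{lm}(\alpha,\beta)$ from \eqref{eq:ctrs}, i.e.\ the right-hand side of \eqref{eq:BFLM}. For the left-hand side of \eqref{eq:BFLM3}, I would expand $\delta^{(i)}(x/y)=\sum_{q\in\Z}\binom{q}{i}x^{-q-1}y^{q+1}$ via \eqref{eq:deltai} and combine with $\nprod{\alpha}{n+i}{\beta}(y)=\sum_p\nprod{\alpha}{n+i}{\beta}_p\,y^{-p-1}$; reading off the coefficient of $x^{-l-1}y^{-m-1}$ yields $\sum_{i\ge 0}\binom{l}{i}\nprod{\alpha}{n+i}{\beta}_{l+m-i}$, matching the left-hand side of \eqref{eq:BFLM}. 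In fact this is just the generating-series form of the intermediate identity \eqref{eq:BFLM2} already established in the proof of Theorem~\ref{th:BFLM}, so both directions reduce to coefficient matching at each $n\in\Z$.

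For \eqref{eq:BFLM3}$\Leftrightarrow$\eqref{eq:BFLM4}, I would multiply \eqref{eq:BFLM3} by $z^{-n-1}$ and sum over $n\in\Z$. The right-hand side becomes the right-hand side of \eqref{eq:BFLM4} via $\sum_{n\in\Z}z^{-n-1}w^n=z^{-1}\delta(w/z)$ applied with $w=x-y$ and $w=-y+x$, each expanded in its second variable per Definition~\ref{def:formexp}. On the left, the reindexing $k=n+i$ factors out $z^{-k-1}\nprod{\alpha}{k}{\beta}(y)$ and leaves the scalar factor $\sum_{i\ge 0}z^i y^{-i-1}\delta^{(i)}(x/y)$. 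The reverse implication then follows by extracting the coefficient of $z^{-n-1}$ from \eqref{eq:BFLM4}, noting that $y^{-1}\delta((x-z)/y)$ is a power series in $z$ with only nonnegative exponents under the expansion convention, so only the terms with $m\ge n$ in the sum $\sum_m \nprod{\alpha}{m}{\beta}(y)z^{-m-1}$ contribute.

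The main obstacle is therefore establishing the delta identity
\[
y^{-1}\delta\!\left(\tfrac{x-z}{y}\right)=\sum_{i\ge 0}z^i\, y^{-i-1}\delta^{(i)}(x/y),
\]
which bridges the two formulations. I would prove this by expanding the left-hand side as $\sum_m y^{-m-1}(x-z)^m$ using \eqref{eq:xypower}, then reindexing $n=-m-1$ and using the identity $(-1)^j\binom{-n-1}{j}=\binom{n+j}{j}$ to identify the coefficient of $z^i$ with $y^{-i-1}\delta^{(i)}(x/y)$ via \eqref{eq:deltai}. Equivalently, this is a formal Taylor expansion in the sense of Lemma~\ref{lem:Taylor}: since $\partial^{(i)}\delta=(-1)^i\delta^{(i)}$ by \eqref{deltazi}, Taylor's theorem yields $\delta(u-v)=\sum_{i\ge 0}v^i\delta^{(i)}(u)$ as formal series, and the claimed identity follows after rescaling $u\mapsto x/y$ and $v\mapsto z/y$.
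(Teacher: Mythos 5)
Your proposal is correct and follows essentially the same route as the paper: the equivalence with \eqref{eq:BFLM3} is the generating-series form of \eqref{eq:BFLM2} read off via \eqref{eq:deltai}, and the passage to \eqref{eq:BFLM4} is obtained by summing against $z^{-n-1}$ and invoking the key identity $\delta\bigl(\tfrac{x-z}{y}\bigr)=\sum_{i\ge 0}\delta^{(i)}\bigl(\tfrac{x}{y}\bigr)\bigl(\tfrac{z}{y}\bigr)^{i}$, which the paper also derives from Taylor's Theorem (Lemma~\ref{lem:Taylor}) and \eqref{deltazi} exactly as in your second argument. Your extra observations (the direct binomial proof of the delta identity and the reversibility via coefficient extraction in $z$) are consistent refinements rather than a different method.
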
 
\begin{proof}
\eqref{eq:BFLM2} is equivalent to 
\begin{align*}
C^{n}(\alpha(x),\beta(y))&=
\sum_{l\in\Z}x^{-l-1}\sum_{i\ge 0}\binom{l}{i}y^{l-i}\nprod{\alpha}{n+i}{\beta}(y).
\end{align*}
Recalling \eqref{eq:deltai} this can be written as \eqref{eq:BFLM3}. This  in turn is equivalent to 
\begin{align*}
&\sum_{n\in \Z}z^{-n-1}\sum_{i\ge 0}y^{-i-1}\delta^{(i)}\left(\frac{x}{y}\right)\nprod{\alpha}{n+i}{\beta}(y)
\\
&=
z^{-1}\sum_{n\in \Z}\left(\frac{x-y}{z}\right)^{n}\alpha(x)\beta(y)
-z^{-1}\sum_{n\in \Z}\left(\frac{-y+x}{z}\right)^{n}\beta(y)\alpha(x)
\\
&=z^{-1}\delta\left( \frac{x-y}{z}\right)\alpha(x)\beta(y)- z^{-1}\delta\left( \frac{-y+x}{z}\right)\beta(y)\alpha(x),
\end{align*}
recalling \eqref{eq:delta}. Finally, Taylor's Theorem of Lemma~\ref{lem:Taylor} and \eqref{deltazi}  imply
\begin{align*}
\delta\left( \frac{x-z}{y}\right)=\sum_{i\ge 0}\delta^{(i)}\left(\frac{x}{y}\right)\left( \frac{z}{y}\right)^{i},
\end{align*}
so that, after relabelling, we obtain 
\begin{align*}
\sum_{n\in \Z}z^{-n-1}\sum_{i\ge 0}y^{-i-1}\delta^{(i)}\left(\frac{x}{y}\right)\nprod{\alpha}{n+i}{\beta}(y)=y^{-1}\delta\left( \frac{x-z}{y}\right)\sum_{m\in \Z}\nprod{\alpha}{m}{\beta}(y)z^{-m-1}.
\end{align*}
Thus the result holds.
\end{proof}
\begin{remark}
\label{rem:BFLMlocal}
For $n\ge 0$ the Borcherds-Frenkel-Lepowsky-Meurmann identity \eqref{eq:BFLM3} follows from locality using Theorem~\ref{theor:ablocal}~(vi) and Lemma~\ref{lem:deltafun}. 
\end{remark}
\medskip

\noindent The next result is very useful for the  construction of local fields. 
\begin{lemma}[Dong's Lemma]
\label{lem:Dong}
Let $\alpha(z),\beta(z),\gamma(z)$ be mutually local fields. 
Then $\nprod{\alpha}{n}{\beta}(z)$ and $\gamma(z)$ are mutually local fields for all $n\in \Z$.
\end{lemma}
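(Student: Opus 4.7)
Since $\nprod{\alpha}{n}{\beta}(z)$ is a field by Lemma~\ref{lem:nprodfield}, the task reduces to proving mutual locality with $\gamma(z)$. Let $r,s,t\ge 0$ be the pairwise locality orders so that $\alpha(z)\loc{r}\beta(z)$, $\alpha(z)\loc{s}\gamma(z)$, and $\beta(z)\loc{t}\gamma(z)$. My plan is to take $M:=r+s+t+|n|$ and establish
\[
(y-z)^{M}\bigl[\nprod{\alpha}{n}{\beta}(y),\gamma(z)\bigr]=0,
\]
which gives locality of order at most $M$.

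Starting from $[\nprod{\alpha}{n}{\beta}(y),\gamma(z)]=\Res_{x}[C^{n}(\alpha(x),\beta(y)),\gamma(z)]$ from \eqref{eq:nthprod}, I expand the two commutators $[\alpha(x)\beta(y),\gamma(z)]$ and $[\beta(y)\alpha(x),\gamma(z)]$ by the Leibniz rule. Because $M\ge t$ and $(y-z)^{t}[\beta(y),\gamma(z)]=0$, multiplying by $(y-z)^{M}$ kills every summand containing $[\beta(y),\gamma(z)]$, leaving (inside $\Res_{x}$) only
\[
(y-z)^{M}\bigl\{(x-y)^{n}[\alpha(x),\gamma(z)]\beta(y)-(-y+x)^{n}\beta(y)[\alpha(x),\gamma(z)]\bigr\}.
\]
I then split $(y-z)^{M}=(y-z)^{t}\cdot(y-z)^{M-t}$, reserving the outer $(y-z)^{t}$, and expand the inner factor using the polynomial identity $(y-z)=(y-x)+(x-z)$:
\[
(y-z)^{M-t}=\sum_{k=0}^{M-t}\binom{M-t}{k}(y-x)^{M-t-k}(x-z)^{k}.
\]
For $k\ge s$ the factor $(x-z)^{k}$ annihilates $[\alpha(x),\gamma(z)]$ and the summand vanishes. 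For $k<s$ the choice of $M$ forces $M-t-k+n\ge r\ge 0$; under this condition Definition~\ref{def:formexp} makes both $(y-x)^{M-t-k}(x-y)^{n}$ and $(y-x)^{M-t-k}(-y+x)^{n}$ equal to the single polynomial $(-1)^{M-t-k}(x-y)^{M-t-k+n}$, so the two pieces collapse into one commutator
\[
(-1)^{M-t-k}(y-z)^{t}(x-y)^{M-t-k+n}(x-z)^{k}\bigl[[\alpha(x),\gamma(z)],\beta(y)\bigr].
\]

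To finish, I apply the Jacobi identity $[[\alpha(x),\gamma(z)],\beta(y)]=[[\alpha(x),\beta(y)],\gamma(z)]-[\alpha(x),[\beta(y),\gamma(z)]]$: the reserved outer $(y-z)^{t}$ kills the second term using $\beta\loc{t}\gamma$, while the polynomial factor $(x-y)^{M-t-k+n}$ with $M-t-k+n\ge r$ kills the first term using $\alpha\loc{r}\beta$. Every summand therefore vanishes and, after applying $\Res_{x}$, the desired identity follows. The main obstacle is the bookkeeping around Definition~\ref{def:formexp}: when $n<0$ the formal series $(x-y)^{n}$ and $(-y+x)^{n}$ are genuinely distinct infinite series and cannot be combined into a Lie bracket unless a polynomial prefactor of total degree $\ge -n$ is available to force them into a common polynomial. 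This is precisely why the $|n|$ correction appears in the bound on $M$; once this collapse is secured, the remainder of the proof is a routine invocation of the Jacobi identity together with the three pairwise locality hypotheses.
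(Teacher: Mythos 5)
Your proof is correct, but it takes a genuinely different route from the paper's. The paper fixes $n\le K-1$ (where $\alpha(z)\loc{K}\beta(z)$), sets $N=K+L+M-n-1$, and studies $D(x,y,z)=(y-z)^N[\gamma(y),C^{n}(\alpha(x),\beta(z))]$ directly: it absorbs the $(x-z)^r$ factors from the binomial expansion of $(y-x+x-z)^{N-M}$ into the index of $C$ via Lemma~\ref{lem:Cnxyk}, truncates the sum because $C^{n+r}=0$ for $n+r\ge K$, and then observes that each surviving term carries at least $(y-x)^{L}(y-z)^{M}$, enough to annihilate the commutator of $\gamma$ with any product of $\alpha(x)$ and $\beta(z)$ --- no Jacobi identity is ever invoked. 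You instead distribute the commutator with $\gamma$ by the Leibniz rule, dispose of the $[\beta,\gamma]$ pieces first, truncate the binomial sum using $\alpha(z)\loc{s}\gamma(z)$ rather than $\alpha(z)\loc{r}\beta(z)$, and then recombine the two leftover halves of $C^{n}$ into the double commutator $[[\alpha(x),\gamma(z)],\beta(y)]$ before finishing with the Jacobi identity and $\alpha$--$\beta$ locality. The recombination step is the delicate point for $n<0$, and you handle it correctly: the prefactor $(y-x)^{M-t-k}$ has degree at least $-n$, so $(y-x)^{M-t-k}(x-y)^{n}$ and $(y-x)^{M-t-k}(-y+x)^{n}$ collapse to the same polynomial, exactly as in the proof of Lemma~\ref{lem:Cnxyk}; and since the components lie in $\End(V)$, the Jacobi identity \eqref{eq:Jacobi} holds automatically, so no extra hypothesis is smuggled in. The trade-offs are minor: the paper's argument is slightly leaner (one locality relation does double duty and the sign of $n$ never enters after the initial reduction), while yours works uniformly for all $n\in\Z$ without first discarding $n\ge K$, at the cost of the Jacobi manipulation and a marginally larger locality bound ($r+s+t+|n|$ versus $K+L+M-n-1$).
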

\begin{proof}
For some orders of locality $K,L,M\ge 0$ we have
\begin{align*}
\alpha(z)\loc{K}\beta(z),\quad \alpha(z)\loc{L}\gamma(z),\quad \beta(z)\loc{M}\gamma(z).
\end{align*} 
In particular, $C^{n}(\alpha(x),\beta(z))=0$ and $\nprod{\alpha}{n}{\beta}(z)=0$ for $n\ge K$. Hence we need only consider $n\le K-1$. Let $N= K+L+M-n-1$ and define 
\begin{align*}
D (x,y,z)&=(y-z)^N\left[\gamma(y),C^{n}(\alpha(x),\beta(z))\right].
\end{align*}
Note that $N\ge 0 $ since $L,M,K-n-1\ge 0$.
Using \eqref{eq:Cnxyk}  we find
\begin{align*}
D (x,y,z)&=(y-z)^M(y-x+x-z)^{N-M} \left[\gamma(y), C^{n}(\alpha(x),\beta(z))\right]
\\
&= 
(y-z)^M \sum_{r=0}^{K-n-1}\binom{N-M}{r}(y-x)^{N-M-r}\,\left[\gamma(y),C^{n+r}(\alpha(x),\beta(z))\right],
\end{align*}
where $r\le K-n-1$ in the sum  since $C^{n+r}(\alpha(x),\beta(z))=0$ for $ n+r\ge K $. 
Therefore  $ N-M-r\ge  L $   for each $r$ in the sum so that
\[
(y-z)^M  (y-x)^{N-M-r}\,\left[\gamma(y),C^{n+r}(\alpha(x),\beta(z))\right]=0,
\] 
since $\alpha(z)\loc{L}\gamma(z)$ and $ \beta(z)\loc{M}\gamma(z)$.
Thus  $D(x,y,z)=0$ which implies 
\[
C^N(\gamma(y),\nprod{\alpha}{n}{\beta}(z))=\Res_{x}D (x,y,z)=0,
\]
i.e. $\gamma(z)\loc{}\nprod{\alpha}{n}{\beta}(z)$ with order of locality at most $ N$.
\end{proof}

\subsection{Creative Fields} 
Let $\vac\in V$ denote  a distinguished state  called the \emph{vacuum vector}.\footnote{The vacuum vector is usually denoted by $\vert 0\rangle$ in CFT.} A \emph{creative  field} for  $a\in V$ is a field
which we notate by
\[
a(z)=\sum_{n\in \Z}a_n z^{-n-1},
\]
with components or \emph{modes} $a_n\in\End(V)$ such that
\begin{align}
a_{-1}\vac &= a, \label{eq:creativity1}\\
\quad a_{n}\vac &= 0,\mbox{ for all }n\ge 0. \label{eq:creativity2}
\end{align}
\eqref{eq:creativity2} is equivalent to $a(z)_{+}\vac =0$ (cf. \eqref{eq:alphapm}).
We sometimes write \eqref{eq:creativity1} and \eqref{eq:creativity2} together as 
\footnote{ 
This is usually written in CFT as $\lim_{z\rightarrow 0} a(z)\vert 0\rangle=a$.}
\[
a(z)\vac=a+O(z)\in V[[z]],
\]
where $V[[z]]$ is the space of formal power series in $z$ with coefficients in $V$.
\medskip

The following lemma describes several important examples of creative fields.
\begin{lemma}\label{lem:creative_examples}
Let $a(z), b(z)$ be creative fields for states $a,b\in V$, respectively.
\begin{enumerate}[(i)]
	\item $za(z)$ creates the zero vector $0$,
	\item  $I(z)=\Id_V$, the identity $V$ endomorphism, creates the vacuum $\vac$,
	\item  $\lambda a(z)+\mu b(z)$ creates $\lambda a+\mu b$ for $\lambda,\mu\in \C$,
	\item  $\nprod{a}{n}{b}(z)$ creates $a_{n}b$ for $n\in\Z$,
	\item $\nord{\partial^{(k)}a(z) b(z)}$ creates $a_{-k-1}b$ for $k\ge 0$,
	\item  $ \partial^{(k)} a(z)$ creates $a_{-k-1}\vac$.
\end{enumerate}
\end{lemma}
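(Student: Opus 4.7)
The plan is to verify, for each field $\alpha(z)$ in the list, the two defining conditions of creativity: that $\alpha_{-1}\vac$ equals the claimed state and that $\alpha_m\vac = 0$ for all $m\ge 0$. Items (i), (ii), and (iii) will be handled first by direct inspection. Reindexing $za(z) = \sum_{m\in\Z} a_{m+1}z^{-m-1}$, the $(-1)$-mode is $a_0$, which annihilates $\vac$ by \eqref{eq:creativity2}, and the non-negative modes $a_{m+1}$ (with $m\ge 0$) do likewise. The field $I(z) = \Id_V$ has only a $(-1)$-mode, equal to $\Id_V$, which sends $\vac$ to $\vac$. Linearity (iii) is immediate since the modes of $\lambda a(z)+\mu b(z)$ are $\lambda a_n + \mu b_n$.

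The substantive case is (iv). I would invoke the component formula \eqref{eq:nthassoc}, which decomposes $\nprod{a}{n}{b}_m\vac$ into a sum of terms of the shapes $a_{n-i}b_{m+i}\vac$ and $b_{m+n-i}a_i\vac$. The second family vanishes outright because $a_i\vac = 0$ for all $i\ge 0$ by \eqref{eq:creativity2}, and the first family vanishes whenever $m+i\ge 0$ for the same reason applied to $b(z)$. For $m\ge 0$ this annihilates the entire expression, while for $m=-1$ only the single $i=0$ contribution survives, giving $a_n\bigl(b_{-1}\vac\bigr) = a_n b$. The field property of $\nprod{a}{n}{b}(z)$ is already provided by Lemma~\ref{lem:nprodfield}, so both creativity conditions hold.

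Items (v) and (vi) then follow formally. For (v), Lemma~\ref{lem:Resprodneg} identifies $\nord{\partial^{(k)}a(z)b(z)}$ with $\nprod{a}{-k-1}{b}(z)$, whose created state is $a_{-k-1}b$ by (iv) with $n=-k-1$. For (vi), take $b(z) = I(z)$ in (v), which is permissible since $I(z)$ is creative for $\vac$ by (ii); unpacking the normal ordering using \eqref{eq:normab} gives $\nord{\partial^{(k)}a(z) I(z)} = \partial^{(k)}a(z)_{-}\cdot\Id_V + \Id_V\cdot\partial^{(k)}a(z)_{+} = \partial^{(k)}a(z)$, and the created state is $a_{-k-1}\vac$ as required. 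There is no substantive obstacle; the only point demanding attention is the index bookkeeping in (iv), where one must check exactly which $(m,i)$ pair produces the lone surviving contribution when $\vac$ is inserted.
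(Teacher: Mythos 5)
Your proof is correct and takes essentially the same route as the paper's: the paper establishes (iv) by applying the series form \eqref{eq:nthprod} to $\vac$ and reading off the constant term, which is the same computation you carry out mode-by-mode via \eqref{eq:nthassoc}, and (v) and (vi) are deduced identically from Lemma~\ref{lem:Resprodneg} and the choice $b=\vac$, $b(z)=I(z)$. Your explicit verification that $\nord{\partial^{(k)}a(z) I(z)}=\partial^{(k)}a(z)$ is a small detail the paper leaves implicit.
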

\begin{proof}(i)--(iii) are trivially true. 
\eqref{eq:nthprod}  implies that 
\begin{align*}
\nprod{a}{n}{b}(z)\vac &=\sum_{i\ge 0}\binom{n}{i}
\left((-z)^i\,
a_{n-i}b(z)-(-z)^{n-i}b(z)a_{i}\right)\vac\\
&=\sum_{i\ge 0}\binom{n}{i} (-z)^i\, a_{n-i}\left(b +O(z)\right)= a_{n}b+O(z),
\end{align*}
using creativity of  $a(z)$ and $b(z)$. Hence (iv) holds.  (iv) implies (v) on using  Lemma~\eqref{lem:Resprodneg}. (vi) follows from (ii) and (v) on choosing $b=\vac$ and $b(z)=I(z)$.
\end{proof} 
\begin{remark}\label{rem:nonunique}
A creative field $a(z)$ for $a\in V$ is clearly  not unique   since, by Lemma~\ref{lem:creative_examples}~(i), $ a(z)+zb(z) $ also creates $a$ for any creative field $b(z)$.
\end{remark}
\noindent The lower truncation property \eqref{eq:lowtrunc} is refined for local creative fields as follows:
\begin{corollary}[Lower Truncation]\label{cor:lowtrunc}
Let $a(z), b(z)$ be local creative fields for $a,b\in V$ respectively. Then 
$a(z) \loc{N} b(z)$ implies
\begin{align} a_{n}b=0 \mbox{ for all } n\ge N.
\label{eq:lowertrun}
\end{align}
\end{corollary}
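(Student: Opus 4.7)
The plan is to combine two facts that are already in place: first, the vanishing of higher residue products for mutually local fields, and second, the creativity formula for residue products.

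More precisely, I would proceed as follows. By hypothesis $a(z)\loc{N}b(z)$, so \eqref{eq:nthprodzero} (which was derived from the definition \eqref{eq:anb} and locality) gives $\nprod{a}{n}{b}(z)=0$ for every $n\ge N$. On the other hand, Lemma~\ref{lem:creative_examples}(iv) asserts that $\nprod{a}{n}{b}(z)$ is a creative field for the vector $a_{n}b$, which by \eqref{eq:creativity1} means
\[
a_{n}b=\bigl(\nprod{a}{n}{b}\bigr)_{-1}\vac.
\]
Since every mode of the zero field is the zero endomorphism, the vanishing of $\nprod{a}{n}{b}(z)$ forces $\bigl(\nprod{a}{n}{b}\bigr)_{-1}=0$, and hence $a_{n}b=0$ for all $n\ge N$.

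There is essentially no obstacle: the proof is a direct two-line consequence of a property (residue product vanishing above the order of locality) that is already established for arbitrary formal series, together with the creativity computation from Lemma~\ref{lem:creative_examples}(iv) that was proved using \eqref{eq:nthprod} and the vacuum-annihilation property \eqref{eq:creativity2}. The only conceptual point worth emphasising is that the corollary is the statement that \emph{locality refines lower truncation}: whereas lower truncation \eqref{eq:lowtrunc} only guarantees $a_n b=0$ for sufficiently large $n$ with no explicit bound, the order of locality $N$ provides a concrete and sharp such bound whenever $a(z)$ and $b(z)$ happen to be mutually local.
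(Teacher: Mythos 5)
Your proof is correct and is essentially the paper's own argument: the paper likewise deduces $\nprod{a}{n}{b}(z)=0$ for $n\ge N$ from \eqref{eq:nthprodzero} and then invokes the creativity of the residue product (Lemma~\ref{lem:creative_examples}) to conclude $a_nb=0$. If anything, your citation of item~(iv) is the more apt one, since the paper's reference to item~(v) only covers negative product indices.
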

\begin{proof}
$\nprod{a}{n}{b}(z)=0$ for $n\ge N$ by  \eqref{eq:nthprodzero}  so that 
Lemma~\ref{lem:creative_examples}~(v) implies the result.   
\end{proof} 

\section{Vertex Algebras} 
\subsection{Uniqueness and Translation Covariance}
Consider a vector space $V$ with vacuum vector $\vac\in V$ and a set of mutually local creative fields $\mathcal{F}:=\{a(z):a\in V\}$.
By Remark~\ref{rem:nonunique}, $a(z)\in \mathcal{F}$ is not the unique creative field for $a\in V$. 
\begin{proposition}\label{goddard}
Suppose that $\phi(z)\in \mathcal{F}$ is a creative field for the zero state $0$. Then 
\begin{equation}
\phi(z)\vac=0 \Leftrightarrow \phi(z)=0.
\label{eq:phi0}
\end{equation}
\end{proposition}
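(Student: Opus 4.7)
The reverse implication is immediate, so the content is in showing that $\phi(z)\vac=0$ forces $\phi(z)=0$. The plan is the standard Goddard-type uniqueness trick: exploit locality against an arbitrary creative field, evaluate on the vacuum, and use the fact that creative fields produce power series in $y$ when applied to $\vac$.

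Fix an arbitrary state $b\in V$ and let $b(z)\in\mathcal{F}$ be its creative field. Since $\mathcal{F}$ consists of mutually local fields, there is some $N\ge 0$ with $(x-y)^N[\phi(x),b(y)]=0$, i.e.\
\[
(x-y)^N\phi(x)b(y)=(x-y)^Nb(y)\phi(x).
\]
I would then apply both sides to $\vac$. By hypothesis $\phi(x)\vac=0$, so the right-hand side vanishes, leaving $(x-y)^N\phi(x)b(y)\vac=0$.

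Now I would use creativity of $b(z)$ to write $b(y)\vac=\sum_{k\ge 0}b_{-k-1}\vac\,y^k\in V[[y]]$, with constant term $b_{-1}\vac=b$. Expanding $(x-y)^N$ as the finite sum $\sum_{j=0}^N\binom{N}{j}(-1)^j x^{N-j}y^j$ (which is unambiguous since $N\ge 0$), the identity $(x-y)^N\phi(x)b(y)\vac=0$ becomes an identity in $V[[x,x^{-1}]][[y]]$. Extracting the coefficient of $y^0$ picks out only the $j=0,k=0$ contribution and yields $x^N\phi(x)b=0$. Since multiplication by $x^N$ is an invertible operation on $V[[x,x^{-1}]]$ (its inverse being multiplication by $x^{-N}$), this forces $\phi(x)b=0$, i.e.\ $\phi_n b=0$ for every $n\in\Z$.

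Because $b$ was an arbitrary element of $V$, every component $\phi_n$ is the zero endomorphism, so $\phi(z)=0$. The only subtle point is making sure the manipulation of formal bivariate series is legitimate under Definition~\ref{def:formexp}, but this is automatic here since $N\ge 0$ makes $(x-y)^N$ a polynomial and $b(y)\vac\in V[[y]]$ contains only non-negative powers of $y$, so no infinite cancellations are involved; the main conceptual step is simply recognising that testing on $\vac$ converts the two-variable locality statement into a one-variable annihilation statement for $\phi(x)$ acting on the generic state $b$.
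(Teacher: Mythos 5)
Your proof is correct and is essentially the paper's own argument: both exploit locality of $\phi(x)$ against an arbitrary creative field, apply the resulting identity to $\vac$ so the hypothesis $\phi(x)\vac=0$ kills one side, and then isolate the $y^0$ coefficient of $(x-y)^N\phi(x)b(y)\vac=0$ (the paper multiplies by $x^{-N}$ before reading off the constant term, you divide by $x^N$ afterwards — the same step). No gaps.
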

\begin{proof}
Assume that  $\phi(z)\vac=0$. Let  $a\in V$ with a creative field $a(z)\in \mathcal{F}$  where $a(z)\loc{N}\phi(z)$ for some $N\ge 0$. Then 
\begin{align*}
0 &=x^{-N}C^N (\phi(x),a(y))\vac =x^{-N}(x-y)^N \phi(x) a(y)\vac= \phi(x) a+O(y),
\end{align*}
i.e. $\phi(x) a=0$. This is true for any  $a\in V$ so that $\phi(x)=0$. The converse is trivial.
\end{proof}
\noindent This result immediately implies:
\begin{corollary} 
\label{cor:aunique} Let $a(z),\widetilde{a}(z)\in \mathcal{F}$ be  creative fields 
for $a\in V$. Then 
\[
a(z)=\widetilde{a}(z)\Leftrightarrow  a(z)\vac=\widetilde{a}(z)\vac.
\]
\end{corollary}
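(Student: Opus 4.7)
The plan is to reduce immediately to Proposition~\ref{goddard} by passing to the difference $\phi(z) := a(z) - \widetilde{a}(z)$. The forward implication is trivial (apply both sides to $\vac$), so the content is the reverse implication.

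First I would observe that $\phi(z)$ creates the zero state: by Lemma~\ref{lem:creative_examples}~(iii), with $\lambda=1$ and $\mu=-1$, the linear combination $a(z)-\widetilde{a}(z)$ is a creative field for $a-a=0$. Next, the hypothesis $a(z)\vac=\widetilde{a}(z)\vac$ rewrites as $\phi(z)\vac=0$. At this stage Proposition~\ref{goddard} wants to give us $\phi(z)=0$ directly.

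The one wrinkle is that Proposition~\ref{goddard} is stated under the assumption $\phi(z)\in\mathcal{F}$, and it is not entirely obvious that $a(z)-\widetilde{a}(z)$ has been placed in $\mathcal{F}$. This is the main (though mild) obstacle. I would resolve it by noting that what the proof of Proposition~\ref{goddard} actually uses is only mutual locality of $\phi(z)$ with each creative field in $\mathcal{F}$, and this holds automatically: the commutator $[\,\cdot\,,\,\cdot\,]$ is bilinear, so if $a(z)\loc{N_1}b(z)$ and $\widetilde{a}(z)\loc{N_2}b(z)$, then $\phi(z)=a(z)-\widetilde{a}(z)$ is mutually local with $b(z)$ of order at most $\max(N_1,N_2)$. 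Hence for every $b\in V$ with creative field $b(z)\in\mathcal{F}$, the argument
\[
0 = x^{-N}C^{N}(\phi(x),b(y))\vac = x^{-N}(x-y)^{N}\phi(x)b(y)\vac = \phi(x)b + O(y)
\]
from the proof of Proposition~\ref{goddard} goes through verbatim, giving $\phi(x)b=0$ for every $b\in V$, and therefore $\phi(x)=0$, i.e.\ $a(z)=\widetilde{a}(z)$.

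In short, the corollary is a one-line consequence of Proposition~\ref{goddard} once one (i) uses Lemma~\ref{lem:creative_examples}~(iii) to identify $a(z)-\widetilde{a}(z)$ as a creative field for $0$, and (ii) invokes bilinearity of locality to justify applying the proposition (or, equivalently, its proof) to the difference.
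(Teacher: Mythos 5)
Your proof is correct and is exactly the argument the paper intends: the corollary is stated as an immediate consequence of Proposition~\ref{goddard}, applied to the difference $\phi(z)=a(z)-\widetilde{a}(z)$, which creates $0$ by Lemma~\ref{lem:creative_examples}~(iii). Your extra remark that the argument only needs mutual locality of $\phi(z)$ with the fields in $\mathcal{F}$ (which follows from bilinearity of the commutator) carefully patches the one point the paper leaves implicit, namely whether $\phi(z)\in\mathcal{F}$.
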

We now describe  a uniqueness criterion for $\mathcal{F}$. Let $T\in \End(V)$ such that
\begin{align}
T\vac &=0 \label{eq:Tvac},
\\
\left[ T,a(z)\right]&=\partial a(z)  \mbox{ for all } a(z)\in \mathcal{F}.
\label{eq:Translation}
\end{align}
In terms of modes, \eqref{eq:Translation} is equivalent to
\begin{align}
[T,a_n]=-na_{n-1}.
\label{eq:Tan}
\end{align}
$T$ is called a \emph{translation operator} and  $\mathcal{F}$ is said to be \emph{translation covariant} if \eqref{eq:Tvac} and \eqref{eq:Translation} are satisfied for a translation operator $T$.
\begin{theorem}[Uniqueness]
\label{theor:uniqueT}
Let $\mathcal{F}$ be a set of mutually local creative fields for $V$. The elements of $\mathcal{F}$ are unique if and only if $\mathcal{F}$ is translation covariant.
\end{theorem}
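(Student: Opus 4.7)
The plan uses the Uniqueness Corollary~\ref{cor:aunique} centrally on both directions of the biconditional, reducing each to the identity $a(z)\vac=e^{zT}a$.

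\textbf{Direction ($\Leftarrow$).} Assume $\mathcal{F}$ is translation covariant. For any $a(z)\in\mathcal{F}$, combining $T\vac=0$ with $[T,a(z)]=\partial a(z)$ gives
\[
\partial\bigl(a(z)\vac\bigr)=(\partial a(z))\vac=[T,a(z)]\vac=T\bigl(a(z)\vac\bigr).
\]
So $f(z):=a(z)\vac\in V[[z]]$ satisfies $\partial f=Tf$ with $f(0)=a_{-1}\vac=a$, whose unique formal power series solution is $f(z)=e^{zT}a$. The vacuum image of $a(z)$ depends only on $a$, and Corollary~\ref{cor:aunique} then forces any two creative fields in $\mathcal{F}$ for the same $a$ to coincide.

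\textbf{Direction ($\Rightarrow$).} Assume uniqueness, giving a well-defined map $a\mapsto a(z)$ from $V$ to $\mathcal{F}$. Define $T\in\End(V)$ by $Ta:=a_{-2}\vac$, and verify the two axioms. For $T\vac=0$: the field $I(z)=\Id_V$ creates $\vac$ (Lemma~\ref{lem:creative_examples}(ii)) and is trivially mutually local with every element of $\mathcal{F}$; uniqueness thus forces $\vac(z)=I(z)$, whence $\vac_{-2}=0$ and $T\vac=0$.

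For $[T,a(z)]=\partial a(z)$, set $\phi(z):=[T,a(z)]-\partial a(z)$ and aim to conclude $\phi(z)=0$ by recycling the proof of Proposition~\ref{goddard}, which only requires (a) $\phi(z)\vac=0$ and (b) $\phi(z)$ mutually local with every $b(y)\in\mathcal{F}$. For (a), the identity $\phi(z)\vac=0$ is equivalent to $a(z)\vac=e^{zT}a$, i.e.\ $a_{-k-1}\vac=T^k a/k!$ for all $k\ge 0$. The cases $k=0,1$ are immediate from creativity and the definition of $T$. For the inductive step I would use Lemma~\ref{lem:creative_examples}(vi), which identifies $\partial^{(k)}a(z)$ as a creative field for $a_{-k-1}\vac$; iterated Lemma~\ref{lem:delalpha} shows $\partial^{(k)}a(z)$ is mutually local with every element of $\mathcal{F}$, so the Goddard argument identifies $\partial^{(k)}a(z)$ with the unique field in $\mathcal{F}$ creating $a_{-k-1}\vac$, and reading off the coefficient of $z^{-2}$ yields the recursion $T(a_{-k-1}\vac)=(k+1)a_{-k-2}\vac$. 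For (b), $\partial a(z)$ is local with $b(y)$ by Lemma~\ref{lem:delalpha}, and the locality of $[T,a(z)]$ with $b(y)$ follows from the Jacobi identity
\[
[[T,a(x)],b(y)]=[T,[a(x),b(y)]]-[a(x),[T,b(y)]]
\]
coupled with a simultaneous bootstrap of $[T,c(y)]=\partial c(y)$ for all $c(y)\in\mathcal{F}$.

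The main obstacle is closing step (a): the derivatives $\partial^{(k)}a(z)$ need not themselves lie in $\mathcal{F}$, so their identification with the chosen creative field in $\mathcal{F}$ for the descendant state $a_{-k-1}\vac$ cannot be made directly from uniqueness, but must be obtained via a Goddard-style argument exploiting the locality of $\partial^{(k)}a(z)$ with $\mathcal{F}$. This is precisely the step where the uniqueness hypothesis does the decisive work and where the technique of Proposition~\ref{goddard} must be redeployed outside $\mathcal{F}$.
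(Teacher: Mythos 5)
Your ($\Leftarrow$) direction is essentially the paper's: both derive $a(z)\vac=e^{zT}a$ from translation covariance (you via the formal ODE $\partial f=Tf$, the paper via the mode recursion $Ta_{-k}\vac=ka_{-k-1}\vac$) and then invoke Corollary~\ref{cor:aunique}. Your definition $Ta=a_{-2}\vac$ and the verification of $T\vac=0$ in the ($\Rightarrow$) direction also match the paper.

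The genuine gap is in step (b) of your ($\Rightarrow$) direction. To run the Goddard argument on $\phi(z)=[T,a(z)]-\partial a(z)$ you must first know that $[T,a(x)]$ is mutually local with every $b(y)\in\mathcal{F}$, and your proposed route to this --- the Jacobi identity together with ``a simultaneous bootstrap of $[T,c(y)]=\partial c(y)$'' --- is circular as stated: the Jacobi identity only yields
\[
(x-y)^{N}\bigl([[T,a(x)],b(y)]+[a(x),[T,b(y)]]\bigr)=0,
\]
which couples the two unknown commutators and gives locality of neither unless translation covariance of $b(y)$ is already known, which is precisely the statement being proved. (Incidentally, the obstacle you flag at step (a) is not really the problem: identifying $\partial^{(k)}a(z)$ with the designated field for $a_{-k-1}\vac$ is exactly the kind of use of the uniqueness hypothesis the paper itself makes --- uniqueness is operative there in the sense that any creative field mutually local with $\mathcal{F}$ coincides with the designated one, and this is how the paper treats $\nprod{a}{n}{b}(z)$.) The paper avoids the locality issue for $[T,a(z)]$ entirely by working at the level of modes: uniqueness together with Dong's Lemma~\ref{lem:Dong} and Lemma~\ref{lem:creative_examples}~(iv) forces $(a_{n}b)(z)=\nprod{a}{n}{b}(z)$, and then the associator formula \eqref{eq:nthassoc} applied to $\vac$ gives $T(a_{n}b)=(a_{n}b)_{-2}\vac=a_{n}Tb-na_{n-1}b$ for all $b\in V$, i.e.\ $[T,a_{n}]=-na_{n-1}$, which is \eqref{eq:Tan}. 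Replacing your $\phi(z)$ construction with this computation closes the argument.
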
  
\begin{proof}
Assume that the elements of $\mathcal{F}$ are unique. Define $T\in \End(V)$ by
\begin{align}
Ta=a_{-2}\vac,
\label{eq:Tadef}
\end{align}
 for each $a\in V$ with unique creative field $a(z)$. By Lemma~\ref{lem:creative_examples}~(ii) we know that $I(z)=\Id_{V}$ is a creative field for $\vac$ and is therefore unique by assumption. Thus \eqref{eq:Tadef} implies \eqref{eq:Tvac}. 
By Dong's Lemma~\ref{lem:Dong} and Lemma~\ref{lem:creative_examples}~(iv) we also know that  $\nprod{a}{n}{b}(z)\in \mathcal{F}$ is a creative   field for $a_{n}b$ for each $a,b\in V$. Hence, by the assumed uniqueness property 
\begin{align}
\left(a_{n}b\right)(z)=\nprod{a}{n}{b}(z).
\label{eq:nprodvertex}
\end{align}
In particular, using \eqref{eq:nthassoc}  we find that for all $a,b\in V$
\begin{align*}
T(a_{n}b)=(a_n b)_{-2}\vac &= \sum_{i\ge 0}(-1)^i\binom{n}{i}
\left(
a_{n-i}b_{i-2}-(-1)^n b_{ n-i-2}\,a_{i}
\right) \vac
\\
&= a_{n}b_{-2}\vac -na_{n-1} b_{-1}\vac=a_{n}Tb-na_{n-1}b.
\end{align*}
 Hence $\mathcal{F}$ is translation covariant using \eqref{eq:Tan}.

Conversely, assume that $\mathcal{F}$ is translation covariant with some translation operator $T$. 
Thus for  $a(z)\in \mathcal{F}$, 
\eqref{eq:Tvac} and \eqref{eq:Translation} imply that $Ta_{-k}\vac=ka_{-k-1}\vac$ for all $k\in \Z$.
Hence $T^n  a=T^n a_{-1}\vac=n!  a_{-n-1}\vac$ for all $n\ge 0$ so that
\begin{align}
a(z)\vac= e^{zT}a.
\label{eq:expzTa}
\end{align} 
But if $\widetilde{a}(z)$ is another translation covariant  creative field for $a$ then $\widetilde{a}(z)\vac= e^{zT}a=a(z)\vac$. 
Hence by Corollary~\ref{cor:aunique} we conclude that $a(z)=\widetilde{a}(z)$. Therefore the elements of $\mathcal{F}$ are unique.
\end{proof}

\subsection{Vertex  Algebras}
We have now gathered all the requisite concepts to define a vertex algebra. 
Let $Y(a,z)$ denote the unique translation covariant creative field for $a\in V$ of Theorem~\ref{theor:uniqueT}. $Y(a,z)$ is called the \emph{vertex operator} for $a$.  
$Y$ can also be construed as a mapping 
\begin{align}
Y:V&\rightarrow \End(V)[[z,z^{-1}]],
\notag\\ 
a&\mapsto Y(a,z)=\sum_{n\in\Z}a_{n}z^{-n-1},
\label{eq:Ylinear}
\end{align} 
called the \emph{state-field correspondence}.
\begin{definition}\label{defn:VA}\index{vertex algebra}
A \emph{Vertex Algebra} consists of the data $(V,Y,T,\vac)$ where $V$ is a vector space,  a distinguished vacuum vector $\vac\in V$, a translation operator $T\in \End(V)$ and a state-field correspondence $Y$  with the following properties:
\begin{align*}
&\mbox{\textbf{locality:}}\quad  Y(a, z) \sim Y(b, z)\mbox{ for all } a,b\in V, \\
&\mbox{\textbf{creativity:}}\quad  Y(a, z)\vac = a + O(z),\\
&\mbox{\textbf{translation covariance:}} \quad 
\left[ T,Y(a,z)\right]=\partial Y(a,z),\quad T\vac =0 .
\end{align*}
\end{definition}
\begin{lemma}
The state-field correspondence is an injective linear map.
\end{lemma}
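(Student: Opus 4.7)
The plan is to verify the two properties separately, leveraging the machinery already established: Lemma~\ref{lem:creative_examples}, the uniqueness theorem (Theorem~\ref{theor:uniqueT}), and the definition of a vertex operator as the \emph{unique} translation-covariant creative field attached to a state.

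For \textbf{linearity}, fix $a,b\in V$ and scalars $\lambda,\mu\in\C$, and consider the formal series $F(z):=\lambda Y(a,z)+\mu Y(b,z)$. First I would observe that by Lemma~\ref{lem:creative_examples}~(iii), $F(z)$ is a creative field for the state $\lambda a+\mu b$. Next I would check translation covariance: since $T$ commutes linearly with endomorphisms and $\partial$ is linear on formal series, the identity $[T,F(z)]=\lambda[T,Y(a,z)]+\mu[T,Y(b,z)]=\lambda\,\partial Y(a,z)+\mu\,\partial Y(b,z)=\partial F(z)$ holds, and $T\vac=0$ is already given. Thus $F(z)$ is a translation-covariant creative field for $\lambda a+\mu b$, and the uniqueness assertion of Theorem~\ref{theor:uniqueT} forces $F(z)=Y(\lambda a+\mu b,z)$. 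This is linearity.

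For \textbf{injectivity}, suppose $Y(a,z)=0\in\End(V)[[z,z^{-1}]]$. Then in particular the mode $a_{-1}$ is the zero endomorphism, so by creativity $a=a_{-1}\vac=0$. Equivalently, extracting the constant term in $z$ from the identity $Y(a,z)\vac=a+O(z)\in V[[z]]$ yields $a$ directly from $Y(a,z)$, so $Y(a,z)=Y(b,z)$ immediately gives $a=b$.

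There is no real obstacle here: both statements are direct consequences of results already proved. The only subtle point is recognising that the translation covariance needed to invoke Theorem~\ref{theor:uniqueT} is preserved under linear combinations, which is immediate from the linearity of the commutator bracket and of $\partial$.
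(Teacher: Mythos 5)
Your proof is correct and follows essentially the same route as the paper: linearity via Lemma~\ref{lem:creative_examples}~(iii) combined with uniqueness of translation-covariant creative fields, and injectivity by extracting $a=a_{-1}\vac$ from the $(-1)$-mode. You merely spell out the translation-covariance check that the paper leaves implicit, which is a reasonable bit of added care.
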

\begin{proof}
Linearity follows from Lemma~\ref{lem:creative_examples}~(iii).
Suppose  $Y(a,z)=Y(b,z)$ for $a,b\in V$. Then $a_{-1}=b_{-1}$ so that  $a=a_{-1}\vac=b_{-1}\vac=b$. Hence $Y$ is injective.
\end{proof}

We describe a number of important properties of vertex operators: 
\begin{proposition}
\label{prop:vopsexamples}
Let $a(z)=Y(a,z)$ and $ b(z)=Y(b,z)$ be the vertex operators for   $a,b\in V$. 
\begin{enumerate}[(i)]
	\item $Y(\vac,z)=\Id_V$,
	\item $Y(a,z)\vac =e^{z T}a$,
	\item  $Y(a_{n}b,z)=\nprod{a}{n}{b}(z)$  for all $n\in\Z$,
	\item $Y(Ta,z)=\partial Y(a,z)$.
\end{enumerate} 
\end{proposition}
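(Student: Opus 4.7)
The unifying strategy is to invoke the Uniqueness Theorem (Theorem~\ref{theor:uniqueT}) in each part: exhibit a candidate creative field with the prescribed state, check it is mutually local with $\mathcal{F}$ and translation covariant, and conclude that it must equal the relevant vertex operator. The key preparatory results are Lemma~\ref{lem:creative_examples} (which identifies the state created by various constructions), Dong's Lemma~\ref{lem:Dong} (preservation of locality under residue products), and Lemma~\ref{lem:dernprod} (the derivation property of $\partial$ on $n^{\nth}$ residue products).

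For (i), the constant field $I(z)=\Id_V$ is creative for $\vac$ by Lemma~\ref{lem:creative_examples}~(ii); it is trivially local with every element of $\mathcal{F}$, and it is translation covariant since $[T,\Id_V]=0=\partial I(z)$. Uniqueness then gives $Y(\vac,z)=\Id_V$. For (ii), the identity $Y(a,z)\vac=e^{zT}a$ is precisely \eqref{eq:expzTa}, which was derived inside the proof of Theorem~\ref{theor:uniqueT} from \eqref{eq:Tvac}, \eqref{eq:Translation}, and the creativity relations \eqref{eq:creativity1}--\eqref{eq:creativity2}; nothing further is required here.

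The substantive step is (iii). I would verify that the candidate $\nprod{a}{n}{b}(z)$ satisfies the three defining properties of a vertex operator for $a_{n}b$. It is creative for $a_{n}b$ by Lemma~\ref{lem:creative_examples}~(iv); it is mutually local with every $Y(c,z)$ by Dong's Lemma~\ref{lem:Dong}; the only point that needs work is translation covariance. For this I would note that, because $[T,\cdot\,]$ is a derivation of the associative algebra $\End(V)$, it is also a derivation of the $n^{\nth}$ residue product viewed in the $x$- and $z$-coefficients of \eqref{eq:nthprod}, giving
\begin{align*}
[T,\nprod{a}{n}{b}(z)]=\nprod{[T,a(z)]}{n}{b}(z)+\nprod{a}{n}{[T,b(z)]}(z)=\nprod{\partial a}{n}{b}(z)+\nprod{a}{n}{\partial b}(z),
\end{align*}
using translation covariance of $a(z)$ and $b(z)$. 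By Lemma~\ref{lem:dernprod} this equals $\partial\nprod{a}{n}{b}(z)$, so $\nprod{a}{n}{b}(z)$ is translation covariant. Uniqueness now forces $Y(a_{n}b,z)=\nprod{a}{n}{b}(z)$. This derivation-of-a-product computation is the main (and only real) obstacle; everything else is bookkeeping.

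Finally, (iv) is a one-line corollary of (iii) together with the definition \eqref{eq:Tadef} of the translation operator. Taking $n=-2$ in (iii) and using Lemma~\ref{lem:Resprodneg} with $k=1$ yields
\begin{align*}
Y(Ta,z)=Y(a_{-2}\vac,z)=\nprod{a}{-2}{\vac}(z)=\nord{\partial a(z)\,Y(\vac,z)}=\nord{\partial a(z)\,\Id_V}=\partial Y(a,z),
\end{align*}
where the last equality uses (i) and the fact that the normally ordered product with $\Id_V$ collapses to $\partial a(z)_{-}+\partial a(z)_{+}=\partial a(z)$. This completes the plan.
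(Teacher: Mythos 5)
Your proposal is correct. Parts (i)--(iii) follow the paper's proof essentially verbatim: the paper also establishes translation covariance of $\nprod{a}{n}{b}(z)$ by letting $[T,\cdot\,]$ act as a derivation (it phrases the computation through $[T,C^n(a(x),b(z))]$ and then takes $\Res_x$, which is the same calculation you perform directly on the residue product), and then invokes Lemma~\ref{lem:dernprod} and the Uniqueness Theorem~\ref{theor:uniqueT}. The only genuine divergence is in (iv). The paper treats (iv) as one more instance of the uniqueness argument: it uses Lemma~\ref{lem:delalpha} to see that $\partial Y(a,z)$ is local, Lemma~\ref{lem:creative_examples}~(vi) to see that it creates $a_{-2}\vac=Ta$, checks translation covariance via $[T,\partial Y(a,z)]=\partial[T,Y(a,z)]$, and concludes by uniqueness. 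You instead deduce (iv) as a corollary of (iii) by taking $n=-2$ and $b=\vac$ and applying Lemma~\ref{lem:Resprodneg} with $k=1$ together with part (i); this is shorter and reuses machinery already built, at the cost of relying on (iii) holding for negative $n$ and on the identity $Ta=a_{-2}\vac$, which does follow from \eqref{eq:Tan} with $n=-1$ and $T\vac=0$. Both routes are valid: the paper's is more uniform (every part is a uniqueness argument), while yours is more economical for (iv).
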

\begin{proof}
$\Id_V\in \mathcal{F}$ creates $\vac$, by Lemma~\ref{lem:creative_examples}~(ii), and is  translation covariant giving (i).
Property~(ii) was shown in \eqref{eq:expzTa} in the proof of the Uniqueness Theorem~\ref{theor:uniqueT}. 
 $\nprod{a}{n}{b}(z)\in \mathcal{F}$ is a local creative field for $a_{n}b$ by 
Lemma~\ref{lem:nprodfield} and Lemma~\ref{lem:creative_examples}~(iii). 
Translation covariance of $a(z)$ and $b(z)$ implies  
\[
\left[ T,C^n(a(x),b(z))\right]=C^n(\partial a(x),b(z))+C^n(a(x),\partial b(z)),
\]
so that 
\begin{align}
\label{eq:Tanb}
\left[ T,\nprod{a}{n}{b}(z)\right]&= \Res_{x} \left[ T,C^n(a(x),b(z))\right]
\notag
\\
&= \nprod{\partial a}{n}{b}(z) + \nprod{a}{n}{\partial b}(z)
=\partial \nprod{a}{n}{b}(z),
\end{align}
by Lemma~\ref{lem:dernprod}. Thus $\nprod{a}{n}{b}(z)$ is translation covariant  and so (iii) holds.

Lemma~\ref{lem:delalpha} and Lemma~\ref{lem:creative_examples}~(vi) imply $\partial Y(a,z)$ is a local creative field for $a_{-2}\vac=Ta$. Translation covariance for $Y(a,z)$ implies
\begin{align*}
\left [T,\partial Y(a,z) \right]&=\partial \left [T, Y(a,z) \right]=\partial (\partial Y(a,z)),
\end{align*}
so that $\partial Y(a,z)$ is also translation covariant. 
Therefore (iv) follows from the Uniqueness Theorem~\ref{theor:uniqueT}. 
\end{proof}
\begin{corollary}\label{cor:Tder}
$T$ is a derivation of the vertex algebra where for all $a,b\in V$: 
\[
T(a_{n}b)=(Ta)_{n} b +a_{n} Tb.
\]
\end{corollary}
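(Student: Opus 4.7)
The plan is to obtain the derivation identity by applying $Y$ to both sides and using the state-field correspondence, together with the results collected in Proposition~\ref{prop:vopsexamples} and Lemma~\ref{lem:dernprod}. Since $Y$ is injective (by the lemma immediately following Definition~\ref{defn:VA}), it suffices to prove that
\[
Y(T(a_n b), z)=Y\bigl((Ta)_n b, z\bigr)+Y(a_n Tb, z).
\]

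First I would rewrite the left-hand side: by Proposition~\ref{prop:vopsexamples}~(iv) applied to the state $a_n b$, we have $Y(T(a_n b), z)=\partial Y(a_n b, z)$, and by Proposition~\ref{prop:vopsexamples}~(iii), $Y(a_n b, z)=\nprod{a}{n}{b}(z)$. Thus
\[
Y(T(a_n b), z)=\partial\, \nprod{a}{n}{b}(z).
\]
Next I would expand this derivative via Lemma~\ref{lem:dernprod}, which asserts that $\partial$ is a derivation of the $n^{\nth}$ residue product:
\[
\partial\,\nprod{a}{n}{b}(z)=\nprod{\partial a}{n}{b}(z)+\nprod{a}{n}{\partial b}(z).
\]

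Now I would identify each summand as a vertex operator. Applying Proposition~\ref{prop:vopsexamples}~(iv) again, $\partial Y(a,z)=Y(Ta,z)$ and $\partial Y(b,z)=Y(Tb,z)$, so $\partial a$ and $\partial b$ (viewed as elements of $\mathcal{F}$) are the vertex operators for $Ta$ and $Tb$ respectively. A second application of Proposition~\ref{prop:vopsexamples}~(iii) therefore gives
\[
\nprod{\partial a}{n}{b}(z)=Y\bigl((Ta)_n b, z\bigr),\qquad \nprod{a}{n}{\partial b}(z)=Y(a_n Tb, z).
\]
Combining these identifications with linearity of $Y$ yields $Y(T(a_n b), z)=Y\bigl((Ta)_n b+a_n Tb, z\bigr)$, and injectivity of the state-field correspondence gives the claimed derivation property.

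There is no real obstacle here: the corollary is essentially a bookkeeping consequence of the fact that both $Y$ and $\nprod{\,}{n}{\,}$ intertwine $T$ on states with $\partial$ on fields, and the derivation property of $\partial$ on residue products has already been proved in Lemma~\ref{lem:dernprod}. The only step requiring any care is making sure the two uses of Proposition~\ref{prop:vopsexamples}~(iii)--(iv) are applied to the correct state; one could alternatively derive the identity directly from $T(a_n b)=(a_n b)_{-2}\vac=\nprod{a}{n}{b}_{-2}\vac$ using the explicit mode formula \eqref{eq:nthassoc} and creativity to kill all but the two surviving terms $a_n b_{-2}\vac-n a_{n-1}b_{-1}\vac$, but the route above avoids any direct computation.
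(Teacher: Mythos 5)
Your proof is correct, but it takes a genuinely different (and heavier) route than the paper's. The paper's proof is a two-line mode computation: from Proposition~\ref{prop:vopsexamples} one reads off $(Ta)_{n}=-na_{n-1}$ (since $Y(Ta,z)=\partial Y(a,z)$), and the mode form of translation covariance \eqref{eq:Tan} gives $[T,a_n]b=-na_{n-1}b=(Ta)_n b$, whence $T(a_nb)=[T,a_n]b+a_nTb=(Ta)_nb+a_nTb$. You instead lift everything to the level of fields: you use injectivity of the state-field correspondence, the identities $Y(Tc,z)=\partial Y(c,z)$ and $Y(a_nb,z)=\nprod{a}{n}{b}(z)$, and the derivation property of $\partial$ on residue products (Lemma~\ref{lem:dernprod}) to conclude $Y(T(a_nb),z)=Y((Ta)_nb+a_nTb,z)$. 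Each step checks out — in particular $\nprod{\partial a}{n}{b}(z)=Y((Ta)_nb,z)$ is a legitimate second application of Proposition~\ref{prop:vopsexamples}~(iii) to the states $Ta$ and $b$ — so the argument is sound. What your route buys is a conceptually transparent statement (``$Y$ and ${*}_n$ intertwine $T$ with $\partial$''), essentially re-running the computation \eqref{eq:Tanb} from the proof of Proposition~\ref{prop:vopsexamples}~(iii); what the paper's route buys is brevity and minimal machinery, needing only \eqref{eq:Tan}. The alternative you sketch at the end, computing $(a_nb)_{-2}\vac$ directly from \eqref{eq:nthassoc}, is also valid and is exactly the calculation already carried out in the proof of the Uniqueness Theorem~\ref{theor:uniqueT}.
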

\begin{proof}
$(Ta)_{n}=-na_{n-1}$ from  Proposition~\ref{prop:vopsexamples}~(iii). 
\eqref{eq:Tan} implies $[T,a_n]b=(Ta)_{n} b$.
\end{proof}

Proposition~\ref{prop:vopsexamples}~(iii) implies that, for a vertex algebra, we may replace all $n^{\nth}$ residue products $\nprod{a}{n}{b}(z)$  by the unique vertex operator $Y(a_{n}b,z)$ in the previous sections. Thus  the locality Theorem~\ref{theor:ablocal} implies the \emph{Commutator Formulas}
\begin{align}
\label{eq:VAcom1}
[a_m,Y(b,z)] &=\sum\limits_{i\ge 0}  \binom{m}{i}Y(a_{i}b,z) z^{m-i},
\\
\label{eq:VAcom2}
[a_m,b_n] &=\sum\limits_{i\ge 0}  \binom{m}{i}(a_{i}b)_{m+n-i},
\end{align}
for all $m,n\in \Z$. Similarly \eqref{eq:nthassoc} implies the A\emph{ssociator Formula}
\begin{align}
(a_nb)_{m}=\sum_{i\ge 0}(-1)^i\binom{n}{i}
\left(
a_{n-i}b_{m+i}-(-1)^n b_{m+n-i}a_{i}
\right).
\label{eq:VAassoc}
\end{align}
These can be combined into the Borcherds-Frenkel-Lepowsky-Meurmann identity 
\begin{align}
\sum_{i\ge 0}\binom{l}{i}\left(a_{n+i}b\right)_{l+m-i}&=
\sum_{i\ge 0}(-1)^i\binom{n}{i}
\left(
a_{l+n-i}b_{m+i}-(-1)^n b_{m+n-i}a_{l+i}
\right),
\label{eq:VABFLM1}
\end{align}
for all $ l,m,n\in\Z$ (cf. \eqref{eq:BFLM}). This in turn is equivalent to (cf.  \eqref{eq:BFLM4}) 
\begin{align}
&z^{-1}\delta\left( \frac{x-y}{z}\right)Y(a,x)Y(b,y)
- z^{-1}\delta\left( \frac{-y+x}{z}\right)Y(b,y)Y(a,x)
\notag
\\
&=y^{-1}\delta\left( \frac{x-z}{y}\right)Y(Y(a,z)b,y).
\label{eq:VABFLM2}
\end{align}  
\eqref{eq:VAcom2} and \eqref{eq:VAassoc} are axioms in the original formulation of vertex algebras by Borcherds in \cite{B}. These were shown to be equivalent to the identity \eqref{eq:VABFLM2}, called the Jacobi identity by Frenkel, Lepowsky and Meurmann \cite{FLM}.

\subsection{Translation and Skewsymmetry}
\begin{lemma}[Translation Symmetry]\label{lem:translation}
$T$ is a generator of translation symmetry:
\begin{align*}
e^{yT} Y(a,x) e^{-y T}=Y(a,x+y).
\end{align*}
\end{lemma}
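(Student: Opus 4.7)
The plan is to set $G(x,y) := e^{yT}\,Y(a,x)\,e^{-yT}$, regarded as an element of $\End(V)[[y]][[x,x^{-1}]]$ (the exponentials $e^{\pm yT}=\sum_{n\ge 0}\tfrac{(\pm y)^n}{n!}T^n$ make sense as formal power series in $y$, and $e^{yT}e^{-yT}=\Id_V$ holds formally), and then show that $G$ satisfies the same formal translation relation that characterises $Y(a,x+y)$.

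First I would differentiate in $y$. Since $\partial_y e^{yT}=Te^{yT}=e^{yT}T$, one has
\[
\partial_y G(x,y)=e^{yT}\,[T,Y(a,x)]\,e^{-yT}.
\]
By the translation covariance axiom $[T,Y(a,x)]=\partial_x Y(a,x)$, and since $\partial_x$ commutes with the $y$-exponentials (it acts only on the $x$-variable), this becomes
\[
\partial_y G(x,y)=e^{yT}\,\partial_x Y(a,x)\,e^{-yT}=\partial_x G(x,y).
\]
Together with the initial condition $G(x,0)=Y(a,x)$, this is a formal first-order equation in $y$ whose unique solution in $\End(V)[[y]][[x,x^{-1}]]$ is $G(x,y)=e^{y\partial_x}Y(a,x)$: indeed, equating coefficients of $y^n$ and iterating $\partial_y G=\partial_x G$ gives $\partial_y^n G\vert_{y=0}=\partial_x^n Y(a,x)$, so the formal Taylor expansion in $y$ yields $\sum_{n\ge 0}y^n\,\partial_x^{(n)}Y(a,x)$.

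Finally, by Taylor's Theorem (Lemma~\ref{lem:Taylor}) applied to the formal series $Y(a,x)=\sum_{n\in\Z}a_n x^{-n-1}$, we have $e^{y\partial_x}Y(a,x)=Y(a,x+y)$, using the expansion convention of Definition~\ref{def:formexp}. Hence $G(x,y)=Y(a,x+y)$, which is the desired identity. The only real subtlety is a bookkeeping one: checking that all the manipulations above make sense as identities in $\End(V)[[y]][[x,x^{-1}]]$ — in particular, that $e^{yT}$ acts on each coefficient $a_n$ of $Y(a,x)$ without violating lower truncation when later applied to a fixed vector; since we are working purely at the level of formal series in $y$, and translation covariance is an identity in $\End(V)[[x,x^{-1}]]$, no convergence issue arises.
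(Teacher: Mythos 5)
Your proof is correct and follows essentially the same route as the paper: both arguments reduce the conjugation $e^{yT}Y(a,x)e^{-yT}$ to $e^{y\partial_x}Y(a,x)$ via translation covariance $[T,Y(a,x)]=\partial_x Y(a,x)$ and then conclude with Taylor's Theorem (Lemma~\ref{lem:Taylor}). The only difference is presentational: the paper quotes the Baker--Campbell--Hausdorff identity $e^{yT}Be^{-yT}=e^{y\ad_T}B$, whereas you rederive that identity in this setting by solving the formal first-order equation $\partial_y G=\partial_x G$ coefficientwise in $y$.
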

\begin{proof} 
The Baker-Campbell-Hausdorff formula for linear operators $A,B$ states that  
\[
e^{A}Be^{-A}=e^{\ad_{A}}B,
\]
where $\ad_{A}(\cdot)=[A,\cdot]$  is the adjoint operator. Thus we find
\begin{align*}
e^{yT} Y(a,x) e^{-y T}=e^{y\ad_{T}}Y(a,x)=e^{y\partial}Y(a,x),
\end{align*} 
by translation covariance \eqref{eq:Translation}. The result follows from Taylor's theorem \eqref{eq:Taylor}.
\end{proof}

\begin{lemma}[Skew-Symmetry]
\label{lem:skew}
Let $a,b\in V$, a vertex algebra. Then 
\begin{align}
Y(a,z)b=e^{zT}Y(b,-z)a,
\label{eq:skew}
\end{align}
or in terms of components:
\begin{align}
a_{n}b{=} (-1)^{n+1}\sum_{k\geq 0}(-1)^k T^k b_{n{+}k}a.
\label{eq:skew2}
\end{align} 
\end{lemma}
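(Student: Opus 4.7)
The plan is to derive skew-symmetry from the locality identity applied to the vacuum, combined with translation symmetry (Lemma~\ref{lem:translation}). Suppose $Y(a,z) \loc{N} Y(b,z)$. Applying the locality relation $(x-y)^N Y(a,x)Y(b,y) = (x-y)^N Y(b,y)Y(a,x)$ to $\vac$ and using creativity (Proposition~\ref{prop:vopsexamples}~(ii), $Y(c,z)\vac = e^{zT}c$), I obtain
\[
(x-y)^N Y(a,x)\, e^{yT} b \;=\; (x-y)^N Y(b,y)\, e^{xT} a.
\]
On the right, Lemma~\ref{lem:translation} rearranged gives $Y(b,y)\, e^{xT} = e^{xT} Y(b, y-x)$, producing $(x-y)^N e^{xT} Y(b, y-x) a$. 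The next move is to set $y = 0$ and cancel the factor $x^N$ to arrive at \eqref{eq:skew}.

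Justifying $y=0$ is the core of the argument. The left side visibly lies in $V((x))[[y]]$, having only non-negative powers of $y$ via $e^{yT}b \in V[[y]]$. For the right side, Corollary~\ref{cor:lowtrunc} yields $b_n a = 0$ for $n \ge N$, so
\[
(x-y)^N Y(b, y-x) a \;=\; (-1)^N \sum_{n \le N-1} b_n a \,(y-x)^{N-n-1},
\]
where every exponent $N-n-1$ is $\ge 0$. Each summand is therefore polynomial in $x,y$, placing the whole expression in $V[[x,y]]$; multiplication by $e^{xT}$ preserves this. Hence the equality holds in $V[[x,y]]$, and extracting the $y^0$ coefficient of both sides gives $x^N Y(a,x)b = x^N e^{xT} Y(b,-x) a$ inside $V[[x]]$. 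Since $x^N$ is a non-zero divisor in $V((x))$, cancellation yields $Y(a,z)b = e^{zT}Y(b,-z)a$, which is \eqref{eq:skew}. The component identity \eqref{eq:skew2} then drops out by extracting the coefficient of $z^{-n-1}$, using $(-z)^{-m-1} = (-1)^{m+1} z^{-m-1}$ and the expansion $e^{zT} = \sum_{k\ge 0} z^k T^{(k)}$.

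The hard part is exactly this clearing step: before multiplication by $(x-y)^N$, the factor $Y(b, y-x)a$ contains arbitrarily large negative powers of $y$, arising from the expansion convention applied to $(y-x)^{-n-1}$ when $n \ge 0$. Only the lower-truncation consequence $b_n a = 0$ for $n \ge N$ (Corollary~\ref{cor:lowtrunc}, itself a consequence of locality) cuts the sum to $n \le N-1$ and turns each $(x-y)^N(y-x)^{-n-1}$ into a polynomial, which simultaneously removes the negative $y$-powers and permits the substitution $y=0$.
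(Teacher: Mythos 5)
Your proof is correct and follows essentially the same route as the paper's: apply locality to the vacuum, use creativity and translation symmetry to rewrite the right-hand side as $(x-y)^{N}e^{xT}Y(b,y-x)a$, invoke lower truncation (Corollary~\ref{cor:lowtrunc}) to justify setting $y=0$, and then cancel $x^{N}$. Your detailed justification of the $y=0$ substitution is just a more explicit version of the paper's one-line appeal to the absence of negative powers of $y$.
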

\begin{proof}
Let $Y(a,z)\loc{N} Y(b,z)$ so that
\[
(z-y)^{N}Y(a,z)Y(b,y)\vac =(z-y)^{N}Y(b,y)Y(a,z)\vac.
\]
By Proposition~\ref{prop:vopsexamples}~(ii) and translation symmetry we have
\begin{align}
(z-y)^{N}Y(a,z)e^{yT}b &= (z-y)^{N}Y(b,y) e^{z T} a
\notag
\\
&=(z-y)^{N} e^{z T}Y(b,y-z) a.
\label{eq:skewzy}
\end{align}
Lower truncation \eqref{eq:lowertrun} implies  $x^{N} Y(b,x) a$ contains no  negative powers of $x$. Thus $(z-y)^{N}Y(b,y-z) a$ also contains no negative powers of $y$. Taking $y=0$ in \eqref{eq:skewzy} we obtain  \eqref{eq:skew} on multiplying by $z^{-N}$.
\eqref{eq:skew2} follows immediately.
\end{proof}

\subsection{Examples of Vertex Algebras}
We have the following very useful generating theorem \cite{FKRW}, \cite{MP}.
 \begin{theorem}[Generating Theorem]
 \label{th:gen}
 Let $V$ be a vector space with $ \vac  \in V$
and $T\in\End(V)$. Let $\{a^{i}(z)\}_{i\in \mathcal{I}}$ for some indexing set $\mathcal {I}$ be a set of mutually local, creative, translation-covariant fields which \emph{generates} $V$ i.e.
\begin{align*}
V = \mbox{span}\{a^{i_1}_{n_1} \hdots a^{i_k}_{n_k} \vac  \ |   n_1, \hdots , n_k \in \Z, i_{1},\ldots i_{k}\in \mathcal{I} \}.
\end{align*}
Then there is a unique vertex algebra $(V, Y, \vac , T)$ with vertex operators defined on the spanning set by
\begin{align}
Y\left(a^{i_1}_{n_1} \hdots a^{i_k}_{n_k} \vac,z\right)=
a^{i_1}*_{n_1}\left(a^{i_1}*_{n_2} \left(\hdots \left(a^{i_k}*_{n_k} I\right)\right)\right)(z),
\label{eq:gen}
\end{align}
a composition of $k$ residue products and where $I(z)=Y(\vac,z)=\Id_{V}$.
\end{theorem}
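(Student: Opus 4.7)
The plan is to apply the Uniqueness Theorem~\ref{theor:uniqueT} to the family $\mathcal{F}$ obtained by closing $\{I(z)\}\cup\{a^i(z):i\in\mathcal{I}\}$ under $n^{\nth}$ residue products for all $n\in\Z$. By construction, every field appearing on the right-hand side of \eqref{eq:gen} lies in $\mathcal{F}$. The first task is to verify that every element of $\mathcal{F}$ is a mutually local, creative, translation-covariant field. Mutual locality is immediate from Dong's Lemma~\ref{lem:Dong} applied inductively (with $I(z)=\Id_V$ trivially mutually local with every field). Creativity follows by induction on the iteration depth via Lemma~\ref{lem:creative_examples}~(iv): since $I(z)$ creates $\vac$, the innermost product $\nprod{a^{i_k}}{n_k}{I}(z)$ creates $a^{i_k}_{n_k}\vac$, and iterating shows that the right-hand side of \eqref{eq:gen} creates $a^{i_1}_{n_1}\cdots a^{i_k}_{n_k}\vac$. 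Translation covariance is preserved under residue products by the computation in \eqref{eq:Tanb}, which relies on Lemma~\ref{lem:dernprod}; combined with the hypothesis $T\vac=0$ and translation covariance of the generators, this propagates translation covariance throughout $\mathcal{F}$.

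With these three properties in hand, Theorem~\ref{theor:uniqueT} guarantees that each state represented by an element of $\mathcal{F}$ admits a \emph{unique} creative field in $\mathcal{F}$. The generating hypothesis ensures that every spanning vector $v=a^{i_1}_{n_1}\cdots a^{i_k}_{n_k}\vac$ is so represented, namely by the field on the right of \eqref{eq:gen}. I would then define $Y(v,z)$ to be this unique field and extend $Y$ linearly to all of $V$, using Lemma~\ref{lem:creative_examples}~(iii) to ensure that linear combinations of creative translation-covariant fields in $\mathcal{F}$ create the corresponding linear combinations of states, and hence remain in $\mathcal{F}$ up to uniqueness. The delicate point--and the main obstacle--is well-definedness: a given state $v$ may admit many distinct expressions as iterated modes applied to $\vac$, and a priori these could yield different fields via \eqref{eq:gen}. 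However, any two such candidates both lie in $\mathcal{F}$, are both translation-covariant, and both create $v$; the Uniqueness Theorem therefore forces them to coincide, dissolving the ambiguity.

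Finally, one checks the axioms of Definition~\ref{defn:VA}: locality, creativity, and translation covariance are built into $\mathcal{F}$ by construction, and $Y$ is a state-field correspondence by definition. Uniqueness of the resulting vertex algebra structure is then automatic, for if $(V,\widetilde{Y},T,\vac)$ were any other vertex algebra extending the given data, then $\widetilde{Y}(a^i,z)=a^i(z)$ for each generator (again by Theorem~\ref{theor:uniqueT}), and Proposition~\ref{prop:vopsexamples}~(iii) applied iteratively forces $\widetilde{Y}$ to satisfy \eqref{eq:gen} on the spanning set, hence $\widetilde{Y}=Y$ on all of $V$.
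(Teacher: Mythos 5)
Your proof follows essentially the same route as the paper's: both build the family $\mathcal{F}$ of iterated residue products, establish that these are mutually local creative fields via Lemma~\ref{lem:nprodfield}, Dong's Lemma~\ref{lem:Dong} and Lemma~\ref{lem:creative_examples}~(iv), obtain translation covariance from \eqref{eq:Tanb}, and then invoke the Uniqueness Theorem~\ref{theor:uniqueT}. Your explicit handling of well-definedness (a state may admit several expressions as iterated modes applied to $\vac$, and linear extension must be checked) and of the uniqueness of the resulting vertex algebra fills in details that the paper's terse proof leaves implicit, but introduces no genuinely different ideas.
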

\begin{proof}
$\mathcal{F}=\{
a^{i_1}*_{n_1}\left(a^{i_1}*_{n_2} \left(\hdots \left(a^{i_k}*_{n_k} I\right)\right)\right)(z)\}$ 
is a set of mutually local creative fields for $V$ by repeated use  of Lemma~\ref{lem:nprodfield}, Dong's Lemma~\ref{lem:Dong} and Lemma~\ref{lem:creative_examples}~(iv).  
Furthermore, $a^{i_1}*_{n_1}\left(a^{i_1}*_{n_2} \left(\hdots \left(a^{i_k}*_{n_k} I\right)\right)\right)(z)$ is translation covariant by  \eqref{eq:Tanb}. 
Hence, by the Uniqueness Theorem~\ref{theor:uniqueT}, $\mathcal{F}$ forms a set of unique vertex operators on the spanning set and therefore by linearity on $V$.
\end{proof}
\subsubsection{The Heisenberg Vertex Algebra} The Heisenberg vertex algebra  is constructed from the Verma module\footnote{e.g. See \cite{K}, \cite{MT} for further details} $M_0$ of the Heisenberg Lie algebra \eqref{eq:Heisalg} given by 
\[
M_0=\mbox{span}\{h_{-n_1}\ldots h_{-n_k}v_{0}|  n_1, \hdots , n_k\ge 1\},
\]  
where $h_{n}v_{0}=0$ for all $n\ge 0$ and $Kv_{0}=v_{0}$. Then with $V=M_{0}$ and  $\vac=v_{0}$ we find $h(z)$ is a creative field for $h=h_{-1}\vac$ which is translation covariant for
\[
T=\sum_{n\ge 0}h_{-n-1}h_{n}.
\]
Thus Theorem~\ref{th:gen} and Lemma~\ref{lem:creative_examples} imply that $h(z)$ generates a vertex algebra with 
\[
Y(h_{-n_1}\ldots h_{-n_k}\vac,z)=
\nord{\partial^{(n_{1})}h(z) \nord{\partial^{(n_{2})}h(z)\ldots
\nord{\partial^{(n_{k-1})}h(z)\partial^{(n_k)}h(z)}}\ldots},
\]
for $n_1, \hdots , n_k\ge 1$.
\subsubsection{The Virasoro Vertex Algebra} The Virasoro vertex algebra  is constructed from a Verma module $M_{C,0}$ of the Virasoro Lie algebra \eqref{eq:Viralg} defined by 
\[
M_{C,0}=\mbox{span}\{L_{-n_1}\ldots L_{-n_k}v_{0}|  n_1, \hdots , n_k\ge 1\},
\]  
where $L_{n}v_{0}=0$ for all $n\ge 0$ and $Kv_{0}=Cv_{0}$. 
Then $\omega(z)=\sum_{n\in \Z}L_{n} z^{-n-2}$ is translation covariant for $T=L_{-1}$ but is \emph{not a creative field} with vacuum $v_0$ since  
\[
\omega(z) v_0=z^{-1}L_{-1}v_0+L_{-2}v_0 +O(z).
\] 
But since $L_{1}L_{-1}v_{0}=0$ it follows that 
\[
M_{C,1}=\mbox{span}\{L_{-n_1}\ldots L_{-n_k}L_{-1}v_{0}|  n_1, \hdots , n_k\ge 1\},
\]  
is submodule of $M_{C,0}$.  
Abusing notation by identifying states, operators and fields associated with 
 $M_{C, 0}$ with the corresponding states, operators and fields induced on the quotient
 $\Vir=M_{C, 0}/M_{C,1}$ we find that $Y(\omega, z) = \omega(z)$ generates a vertex algebra  with $T=L_{-1}$,   $\vac=v_0$ and $V=\Vir$ with vertex operators 
\[
Y(L_{-n_1}\ldots L_{-n_k}\vac,z)=
\nord{\partial^{(n_{1})}L(z) \nord{\partial^{(n_{2})}L(z)\ldots
\nord{\partial^{(n_{k-1})}L(z)\partial^{(n_k)}L(z)}}\ldots},
\]
for $n_1, \hdots , n_k\ge 2$.

\end{document}